\documentclass[12pt, reqno]{amsart} 
\topmargin      -10mm 
\textwidth      160 true mm
\usepackage{amsmath, amssymb, amsfonts, amstext, amsthm, amscd}
\usepackage[mathscr]{euscript}
\usepackage{enumerate}
\usepackage{tikz,color,soul}
\usepackage[english]{babel}
\usepackage{chngcntr}
\usepackage{stmaryrd}
\usepackage{a4wide}

\usepackage[pagebackref]{hyperref}

\usepackage[left=2.5cm,top=2.5cm,bottom=3cm,right=2.5cm]{geometry}

%\usetikzlibrary{graphs,graphdrawing,arrows.meta}
%\usegdlibrary{circular}
\usetikzlibrary{arrows.meta, positioning,arrows}
\textheight     240 true mm 
\makeatletter
\newtheorem*{rep@theorem}{\rep@title}
\newcommand{\newreptheorem}[2]{%
	\newenvironment{rep#1}[1]{%
		\def\rep@title{#2 \ref{##1}} 
		\begin{rep@theorem}}%
		{\end{rep@theorem}}}
\makeatother

\newreptheorem{conjecture}{Conjecture}

\newtheorem{theorem}{Theorem}[section]

\theoremstyle{definition}
\newtheorem{definition}[theorem]{Definition}

\newtheorem{remark}[theorem]{Remark}

\theoremstyle{plain}
\numberwithin{equation}{section}
\newtheorem*{theorem*}{Theorem}
\newtheorem{proposition}[theorem]{Proposition}

\newtheorem{conjecture}[theorem]{Conjecture}

\theoremstyle{definition}

\numberwithin{equation}{section}

\title{Rationality of Seshadri constants on blow-ups of ruled surfaces}

 \author[K. Hanumanthu]{Krishna Hanumanthu}
\address{Chennai Mathematical Institute, H1 SIPCOT IT Park, Siruseri, Kelambakkam 603103, India}
 \email{krishna@cmi.ac.in}

 \author[C. J.  Jacob]{Cyril J. Jacob}
 \address{Chennai Mathematical Institute, H1 SIPCOT IT Park, Siruseri, Kelambakkam 603103, India}
 \email{cyril@cmi.ac.in}

\author[Suhas B. N.]{Suhas B. N.}
 \address{Chennai Mathematical Institute, H1 SIPCOT IT Park, Siruseri, Kelambakkam 603103, India}
 \email{suhasbn@cmi.ac.in}

 \author[A. K. Singh]{Amit Kumar Singh}
 \address{Chennai Mathematical Institute, H1 SIPCOT IT Park, Siruseri, Kelambakkam 603103, India}
 \email{amitksingh@cmi.ac.in}

 \subjclass[2020]{14C20,14E05,14J26}

 \keywords{Seshadri constant, Ruled surfaces, Linear system of curves}

\date{\today}
\begin{document}
\begin{abstract}
		In this note, we continue the study of Seshadri constants on blow-ups of Hirzebruch surfaces initiated in \cite{HJSS2024}. Now we consider blow-ups  of ruled surfaces more generally. We propose a conjecture for classifying all the negative self-intersection curves on the blow-up of a ruled surface at very general points, analogous to the $(-1)$-curves conjecture in $\mathbb{P}^2$. Assuming  this conjecture is true, we  exhibit an ample line bundle with an irrational Seshadri constant at a very general point on such a surface. 
\end{abstract}
\maketitle

\section{Introduction}\label{Introduction}

Seshadri constants measure local positivity of line bundles on projective varieties and a lot of research has focussed on questions related to Seshadri constants. One of the fundamental open questions about Seshadri constants is whether they can be irrational. Some recent work on this question can be found in \cite{Dumnicki2016, Hanumanthu2018,Farnik2020}. In these works, irrational Seshadri constants are exhibited on general blow-ups of the complex projective plane $\mathbb{P}^2$ assuming certain conjectures are true. 

In this note, we prove analogous results for general blow-ups of ruled surfaces, assuming certain conjectures are true. 
In the special case of Hirzebruch surfaces, results in this note give an answer to \cite[Question 4.11]{HJSS2024}. 

For a detailed literature and introduction to the problem, see Introduction of \cite{HJSS2024}. 

In Section \ref{section2}, we list some conjectures on ruled surfaces which are natural generalizations of well-known conjectures on $\mathbb{P}^2$ and Hirzebruch surfaces. Conjecture \ref{conjecture-1} and Conjecture \ref{conjecture-2} are new and are equivalent to each other.  In Proposition \ref{NBS analog in our case}  we relate these conjectures to the well-known 
Nagata-Biran-Szemberg Conjecture (Conjecture \ref{Nagata Biran Szemberg Conjecture}). 

In Section \ref{section3}, we study Seshadri constants. After proving some preliminary results about them, we prove our main result on irrational Seshadri constants which is stated below. Our argument is inspired by that of \cite{Hanumanthu2018}. To generalize that proof, we introduce a notion called \textit{good form} for divisors (Definition \ref{definition of good form}) on blow-ups of ruled surfaces which is analogous to standard form for divisors on blow-ups of $\mathbb{P}^2$. 

\begin{theorem*}[Theorem \ref{irrationality theorem}]
   % Suppose that Conjecture \ref{conjecture-1} is true. Then there exists an $r$ such that on $X_r$, there is an ample line bundle $L$ with $\varepsilon(X_r,L,x)$ is irrational for a very general point $x \in X_r$. In fact, there are infinitely many such $r$'s.
Let $X$ be a ruled surface and let $X_r$ be the blow-up of $X$ at $r$ very general points. 
%Let $x \in X_r$ be a very general point and $\pi_x : \text{Bl}_x(X_r) \rightarrow X_r$ the blow-up of $X_r$ at $x$ with $E_x$ as the exceptional divisor. 
Suppose that Conjecture \ref{conjecture-1} is true. Then there exists a pair $(r, L)$ where $r$ is a positive integer and $L$ is an ample line bundle on $X_r$ such that $\varepsilon(X_r,L,x)$ is irrational for a very general point $x \in X_r$. 

    In fact, such a pair exists for infinitely many $r$. 
\end{theorem*}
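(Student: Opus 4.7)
The plan is to adapt the strategy of \cite{Hanumanthu2018}, substituting the $(-1)$-curves conjecture on $\mathbb{P}^2$ with Conjecture \ref{conjecture-1} and the standard form of divisors on $\mathbb{P}^2$-blow-ups with the good form of Definition \ref{definition of good form}. The target is to exhibit an ample $L$ on $X_r$ for which $\varepsilon(X_r, L, x)$ at a very general $x$ is forced to equal $\sqrt{L^2}$, with $L^2$ chosen to be a non-square positive integer.

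First I would specify the candidate line bundle. Writing $L = \pi^*L_0 - m\sum_{i=1}^r E_i$ on $X_r$, where $\pi: X_r \to X$ is the blow-up and $L_0$ is a suitable combination of a section class, a fiber class, and (when $g(C) > 0$) a pullback from $\operatorname{Pic}(C)$, one chooses the coefficients and $m$ so that $L$ is ample on $X_r$ (via a Kleiman-type check using the classification of nef cones on ruled surfaces) and $L^2$ equals a positive non-square integer. The parameters should be set up as a function of $r$ running over an infinite set, so that the infinitely-many-$r$ conclusion is built into the construction from the start.

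The central step is to show $\varepsilon(X_r, L, x) = \sqrt{L^2}$ for a very general $x$. Since the Lazarsfeld-type bound gives $\varepsilon \le \sqrt{L^2}$ automatically, suppose for contradiction that the inequality is strict, so some irreducible $C \subset X_r$ through $x$ satisfies $L \cdot C / \operatorname{mult}_x C < \sqrt{L^2}$. Apply the good-form reduction of Definition \ref{definition of good form} to $C$, which brings its numerical class into a preferred range via elementary transformations on the ruling and interactions with the section class; this is the true analog of the Cremona reductions used on $\mathbb{P}^2$. Once in good form, Conjecture \ref{conjecture-1} restricts the negative self-intersection components to a short explicit list of classes analogous to $(-1)$-curves. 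A direct calculation on each class on that list should then show that $L \cdot C / \operatorname{mult}_x C \ge \sqrt{L^2}$, delivering the contradiction. Proposition \ref{NBS analog in our case} can likely be invoked here to route through the Nagata-Biran-Szemberg bound rather than arguing curve by curve.

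The principal obstacle is the good-form reduction itself: a ruled surface admits no projective-linear Cremona duality, so the reduction has to be built out of elementary transformations on fibers and exchanges involving sections. One must prove that every effective class on $X_r$ can be reduced to good form in finitely many steps and, more importantly, that the reduction process does not increase the critical ratio $L \cdot C / \operatorname{mult}_x C$. Once this reduction is in hand, Conjecture \ref{conjecture-1} closes the argument for a single $r$, and the infinite-family statement follows because the ampleness of $L$, the non-squareness of $L^2$, and the case analysis all remain valid along the chosen arithmetic progression of $r$.
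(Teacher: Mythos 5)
Your high-level architecture matches the paper's: construct an ample $L = \pi^*s\mathcal{L} - \sum E_i$ with $L^2$ a non-square positive integer, prove $\varepsilon(X_r,L,x) = \sqrt{L^2}$ using Conjecture \ref{conjecture-1} together with the good form, and observe the construction works for infinitely many $r$. But there is a genuine conceptual gap in how you use the good form, and it causes you to manufacture an obstacle that does not exist while skipping the work that does. In the paper, ``good form'' is a numerical condition on the \emph{line bundle} $\pi_x^*L - \sqrt{L^2}E_x$ (it is expressed in the basis $H_e$, $F_e$, $H_i'$ with non-negative coefficients), and it is \emph{arranged by construction}: Theorem \ref{construction of the required ample line bundle} chooses $s$ and $r$ so that the four conditions of Definition \ref{definition of good form} hold. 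There is no reduction process applied to the Seshadri-computing curve $C$. Instead, one argues: if $\varepsilon < \sqrt{L^2}$, the strict transform $\widetilde{C}$ on ${\rm Bl}_x(X_r)$ has negative self-intersection, so Conjecture \ref{conjecture-1} puts it on a short list ($(-1)$-curve, $\widetilde{\Gamma_e}$, or image with $\mathcal{C}^2 \le 0$), and Proposition \ref{necessary condition for L to be in good form} shows a good-form bundle meets every curve on that list non-negatively, contradicting $(\pi_x^*L - \sqrt{L^2}E_x)\cdot\widetilde{C} < 0$. Your ``principal obstacle'' --- showing every effective class can be reduced to good form by elementary transformations without increasing the ratio $L\cdot C/\operatorname{mult}_xC$ --- is a phantom: no such Cremona-style reduction is needed or available, and attempting one is not the analog of \cite{Hanumanthu2018} that the paper uses.

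Two further points. First, your suggestion to route the single-point estimate at $x$ through Proposition \ref{NBS analog in our case} is misplaced: that proposition concerns the multi-point constant at $p_1,\dots,p_r$ and is used (via Remark \ref{NBS for sL} and Theorem \ref{ampleness criterion}) to establish \emph{ampleness} of $L$ on $X_r$; a ``Kleiman-type check using the classification of nef cones'' is not available on a very general blow-up, which is exactly why the paper needs the NBS-type input there. Second, the case $\mathcal{C}^2 \le 0$ from Conjecture \ref{conjecture-1} is not covered by the good-form intersection computation and needs the separate argument of Theorem \ref{seshadri constant equal to rootLsquare} (forcing $\mathcal{C}^2 = 0$, then splitting into $\mathcal{C} = f$, $\mathcal{C} = \alpha\Gamma_e$, or the countable family $\beta = \alpha e/2$ avoided by very generality); your ``direct calculation on each class'' gestures at this but does not supply it. As written, the proposal would not compile into a proof without replacing the reduction step by the paper's construction-plus-intersection argument.
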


We work over the field of complex numbers. We follow the notation of \cite{HJSS2024}. For preliminaries and notation on ruled surfaces, refer to \cite[Section V.2]{Hartshorne1977}.

\section{Some  Conjectures}\label{section2}
 Throughout this note $X$ denotes a ruled surface over a complex non-singular projective curve $\Gamma$ with invariant $e$.  Denote the genus of $\Gamma$ by $g$. 
 
 We fix $r$ very general points $p_1,\dots,p_r \in X$ and consider the blow-up  $\pi: X_r \rightarrow X$ of $X$ at $p_1,\dots,p_r$. The normalized section  $\Gamma_e$ and the fiber $f$ form a  basis of the free abelian group $\text{Num}(X)$, the Picard group of $X$ modulo numerical equivalence. Their pullbacks in $X_r$ are denoted by $H_e$ and $F_e$, respectively. 
 By abuse of notation, we will use $H_e$ and $F_e$ to denote divisor classes as well as specific
curves linearly equivalent to these divisor classes. 
 
 The canonical divisor classes on $X$ and $X_r$ are denoted by $K_X$ and $K_{X_r}$ respectively. 
 If $D \subset X$ is a curve then
$ \widetilde{D}$ denotes the strict transform of $D$ on $X_r$. 

 We emphasize that by equality of divisors we always mean equality in $\text{Num}(X)$.

We now state two conjectures on ruled surfaces. The first one characterizes curves on $X_r$ which have negative self-intersection. The second gives a lower bound on the self-intersection of a reduced and irreducible curve in terms of its multiplicities at very general points. We later show that the two conjectures are equivalent. 

 \begin{conjecture}\label{conjecture-1}
Let $X$ be a ruled surface and let $X_r \to X$ be a blow up of $X$ at $r$ very general points.  Let $C$ be a reduced irreducible curve in $X_r$ with $C^2 <0$. Then one of the following holds:
\begin{enumerate}
\item[(i)] $C$ is a $(-1)$-curve,
\item[(ii)] $C = \widetilde{\mathcal{C}}$, where $\mathcal{C}$ is a curve on $X$ with $\mathcal{C}^2 \le 0$, or  % and $\widetilde{\mathcal{C}}$ as its strict transform,
\item[(iii)] $C = \widetilde{\Gamma_e}$.
\end{enumerate}
\end{conjecture}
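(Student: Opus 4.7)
The plan is to mirror the strategy of attempts on the analogous $(-1)$-curve conjecture for $\mathbb{P}^2$. Write the numerical class of $C$ as $C \equiv aH_e + bF_e - \sum_{i=1}^r m_i E_i$ with $a \ge 0$ and $m_i \ge 0$; the intersection numbers $H_e^2 = -e$, $H_e \cdot F_e = 1$, $F_e^2 = 0$ yield
\begin{equation*}
C^2 \;=\; -ea^2 + 2ab - \sum_{i=1}^r m_i^2.
\end{equation*}
First I would dispose of the degenerate possibilities. If $C = E_i$ for some $i$ then (i) holds. If $a = 0$ and $C$ is not exceptional, irreducibility forces $\pi(C)$ to be a single fiber $F$; since very general points lie on distinct fibers, $\widetilde{F}$ is a $(-1)$-curve when $F$ passes through some $p_i$ and $\widetilde{F}^2 = 0$ otherwise, so the constraint $C^2 < 0$ again lands us in case (i). Finally, if $\pi(C) = \Gamma_e$, then $\Gamma_e$ avoids the very general points and $C = \widetilde{\Gamma_e}$, giving case (iii).

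The substantive case is $a \ge 1$ and $\pi(C) \ne \Gamma_e$, so that $\pi(C)$ is a genuinely new curve on $X$. Here I would argue by semicontinuity: very generality of $p_1,\dots,p_r$ forces any numerical class represented by an irreducible curve through the $p_i$ to remain so for all choices in a Zariski-open subset, yielding an $r$-dimensional family of such $C$ as the points vary. Combining this with Riemann--Roch on $X_r$,
\begin{equation*}
\chi(\mathcal{O}_{X_r}(C)) \;=\; \chi(\mathcal{O}_{X_r}) + \tfrac{1}{2}\bigl(C^2 - C\cdot K_{X_r}\bigr),
\end{equation*}
and the fact that imposing multiplicity $m_i$ at a general point cuts $\binom{m_i+1}{2}$ conditions, the target is to derive a Nagata-type inequality on $(a,b,m_i)$ that forces $C^2 \ge -1$, with equality only for $(-1)$-curves. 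A useful intermediate step is to control $\pi(C) \subset X$ directly: if $\pi(C)^2 > 0$ then $\pi(C)$ should move in a positive-dimensional system on $X$ and very generality should render the multiplicity conditions at the $p_i$ independent, contradicting $C^2 < 0$; whereas if $\pi(C)^2 \le 0$ with $\pi(C) \ne \Gamma_e$, the curve $\pi(C)$ is essentially rigid in $X$ and $C$ falls under case (ii).

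The hard part is precisely what keeps the $\mathbb{P}^2$ analogue open: ruling out exotic negative curves whose incidence with very general points evades the expected-dimension heuristic. Standard tools --- semicontinuity, degeneration to a rational surface, specialization along one-parameter families, or vanishing from multiplier ideals --- stall when the expected-dimension count is only marginally violated. A full proof would likely require either a new SHGH-type positivity inequality linking $a$, $b$, and $\sum m_i^2$ on ruled surfaces (incorporating both the invariant $e$ and the genus $g$), or a ruled-surface analogue of Cremona reduction to transport $C$ to a manifestly non-negative class. For this reason, rather than attempting a direct proof, my realistic plan would be to reduce the conjecture to known open problems, as the authors do via Proposition \ref{NBS analog in our case}, and to verify it in tractable special cases such as small $a$ or the Hirzebruch case $g = 0$.
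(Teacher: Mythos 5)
The statement you were asked to prove is not proven in the paper at all: it is stated as Conjecture \ref{conjecture-1}, and the authors only establish its equivalence with Conjecture \ref{conjecture-2} (Proposition \ref{equivalence of conjectures 1 and 2}) and derive consequences from it conditionally. So there is no ``paper proof'' to compare against, and your proposal --- which you yourself concede stalls at the substantive case --- is not a proof either. The gap is the entire main case: for $C$ with $a \ge 1$ whose image $\pi(C)$ has positive self-intersection, the expected-dimension/semicontinuity heuristic you invoke does not rule out irreducible curves through very general points with $C^2 < -1$; this is precisely the SHGH-type obstruction that keeps the $\mathbb{P}^2$ analogue open, and none of the tools you list (Riemann--Roch counts, specialization, multiplier ideals) is known to close it. Your concluding suggestion to instead prove an equivalence with a Nagata-type inequality is in fact what the paper does, but that is a reformulation, not a proof.

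One smaller inaccuracy worth flagging: you assert that $\Gamma_e$ always avoids the very general points. That holds when $h^0(\mathcal{O}_X(\Gamma_e)) = 1$ (so $|\Gamma_e| = \{\Gamma_e\}$), but fails for $X = \Gamma \times \mathbb{P}^1$, where $\Gamma_e$ moves in a pencil; the paper has to treat this case separately (see the use of \cite[Lemma 17]{Garcia2005} in Proposition \ref{necessary condition for L to be in good form} and the $\alpha\Gamma_e$ case in Conjecture \ref{conjecture-2}(ii)). This does not affect the truth of case (iii) of the statement, but it would matter in any argument that relies on rigidity of $\Gamma_e$.
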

\begin{remark}
    When $X$ is a Hirzebruch surface, the above conjecture is equivalent to \cite[Conjecture 4.8]{HJSS2024}.
\end{remark}
\begin{conjecture}\label{conjecture-2}
Let $X$ be a ruled surface and let $p_1,\dots, p_r \in X$ be very general points. Let $\mathcal{C}$ be a reduced irreducible curve in $X$ 
and define  $m_i = \text{mult}_{p_i}\mathcal{C}$ for $1 \le i \le r$. Assume that $m_i > 0$ for some $i$. Then 
$\mathcal{C}^2 \ge \sum\limits_{i =1}^r {m}_i^2 -1$. Further, for equality, we have the following:
\begin{enumerate}
    \item[(i)] If $e \neq 0$, the equality holds only if $\widetilde{\mathcal{C}}$ is a $(-1)$-curve or $\widetilde{\mathcal{C}} =  \widetilde{\Gamma_e}$, where $\widetilde{\mathcal{C}}$ is the strict transform of $\mathcal{C}$ in $X_r$.
    \item[(ii)] If $e = 0$, the equality holds only if $\widetilde{\mathcal{C}}$ is a $(-1)$-curve or $\widetilde{\mathcal{C}} =  \widetilde{\alpha\Gamma_e}$, where $\alpha \geq 1$.
    %\widetilde{\alpha\mathcal{C}}$ is the strict transform of $\mathcal{C}$ in $X_r$.
\end{enumerate}

\end{conjecture}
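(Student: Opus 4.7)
The plan is to establish the equivalence of Conjecture \ref{conjecture-1} and Conjecture \ref{conjecture-2}; in particular, Conjecture \ref{conjecture-2} will be deduced from Conjecture \ref{conjecture-1}, with the converse serving to confirm that the two capture the same geometric phenomenon. The pivot of both directions is the strict transform identity
$$\widetilde{\mathcal{C}}^2 = \mathcal{C}^2 - \sum_{i=1}^r m_i^2,$$
which converts self-intersection statements on $X_r$ into statements about $\mathcal{C}^2$ and the multiplicities $m_i$ on $X$.

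For the forward direction (Conjecture \ref{conjecture-1} $\Rightarrow$ Conjecture \ref{conjecture-2}), let $\mathcal{C} \subset X$ be reduced and irreducible with some $m_i > 0$. If $\widetilde{\mathcal{C}}^2 \geq 0$, then $\mathcal{C}^2 \geq \sum m_i^2 > \sum m_i^2 - 1$ and the bound holds strictly. If $\widetilde{\mathcal{C}}^2 < 0$, I would apply Conjecture \ref{conjecture-1} to $\widetilde{\mathcal{C}}$. The $(-1)$-curve case yields equality immediately. The case $\widetilde{\mathcal{C}} = \widetilde{\mathcal{D}}$ with $\mathcal{D}^2 \leq 0$ is the subtle one: since very general points do not lie on any rigid curve, $\mathcal{C} = \mathcal{D}$ must move in at least a pencil, whence $\mathcal{C}^2 \geq 0$ and therefore $\mathcal{C}^2 = 0$. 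A moving curve with zero self-intersection defines a fibration whose general fibers are disjoint and smooth, so $\mathcal{C}$ passes through at most one very general point and only with multiplicity $1$; hence $\sum m_i^2 = 1$ and equality $\mathcal{C}^2 = 0 = \sum m_i^2 - 1$ holds. A final adjunction and numerical-class computation places such $\mathcal{C}$ either as a $(-1)$-curve or into the $\alpha \Gamma_e$ family present when $e = 0$. The remaining case $\widetilde{\mathcal{C}} = \widetilde{\Gamma_e}$ collapses into the previous one when $e = 0$ and is vacuous when $e > 0$, as $\Gamma_e$ is then rigid.

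For the reverse direction (Conjecture \ref{conjecture-2} $\Rightarrow$ Conjecture \ref{conjecture-1}), let $C \subset X_r$ be reduced and irreducible with $C^2 < 0$. If $C$ is an exceptional divisor, it is a $(-1)$-curve. Otherwise $C = \widetilde{\mathcal{C}}$ for some reduced irreducible $\mathcal{C}$ on $X$. If all $m_i = 0$, then $C^2 = \mathcal{C}^2 < 0$, placing $C$ in case (ii) of Conjecture \ref{conjecture-1}. If some $m_i > 0$, Conjecture \ref{conjecture-2} gives $\mathcal{C}^2 \geq \sum m_i^2 - 1$, so $C^2 \geq -1$; together with $C^2 < 0$ this forces $C^2 = -1$, and the equality clause of Conjecture \ref{conjecture-2} slots $\widetilde{\mathcal{C}}$ into case (i) or (iii) of Conjecture \ref{conjecture-1}.

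The main obstacle will be the forward-direction subcase $\widetilde{\mathcal{C}} = \widetilde{\mathcal{D}}$ with $\mathcal{D}^2 \leq 0$: one must rule out higher multiplicities and unbalanced multi-point configurations on a self-intersection-zero moving curve, and then reconcile the resulting reduced irreducible classes with the $(-1)$-curve versus $\alpha\Gamma_e$ dichotomy in Conjecture \ref{conjecture-2}(ii) via a careful adjunction and numerical-class analysis that separates the $e = 0$ and $e \neq 0$ regimes.
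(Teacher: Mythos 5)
Your overall strategy---deduce Conjecture \ref{conjecture-2} from Conjecture \ref{conjecture-1} via the identity $\widetilde{\mathcal{C}}^2 = \mathcal{C}^2 - \sum_i m_i^2$, and run the reverse implication to obtain equivalence---is exactly the paper's (Proposition \ref{equivalence of conjectures 1 and 2}), and your reverse direction matches its argument essentially verbatim. The gaps are all in the forward direction, and they sit precisely at the points you flag as obstacles but do not resolve. First, to control multiplicities at very general points on a curve with $\mathcal{C}^2 = 0$, the paper does not argue via a fibration; it uses the inequalities of \cite{Ein-Lazarsfeld1993} and \cite[Lemma 1]{Xu1994}: $\mathcal{C}^2 \ge m(m-1)$ for the multiplicity $m$ at a very general point (forcing $m_i \le 1$), and $\mathcal{C}^2 \ge \sum_i m_i^2 - \min\{m_i : m_i \ne 0\}$, which immediately yields $\sum_i m_i^2 \le 1$. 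Your fibration argument can likely be made rigorous, but as stated it assumes the pencil is base-point free and that two independent very general points cannot lie on a single member, neither of which you justify; the Xu-type inequality bypasses all of this and is also what the paper uses to dispose of the case $\widetilde{\mathcal{C}} = \widetilde{\Gamma_e}$ uniformly in $e$. Your treatment of that case omits $e<0$ entirely, where $\Gamma_e^2 = -e > 0$, so the curve is neither rigid in the sense you use nor absorbed by the non-positive self-intersection case.

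Second, the ``final adjunction and numerical-class computation'' that you defer is not routine and is exactly where a specific external input is needed. Writing $\mathcal{C} = \alpha\Gamma_e + \beta f$, the condition $\mathcal{C}^2 = 0$ gives $\alpha(2\beta - \alpha e) = 0$, and besides the branches $\mathcal{C} = f$ (a $(-1)$-curve after blow-up) and $\beta = e = 0$ (the $\alpha\Gamma_e$ family of Conjecture \ref{conjecture-2}(ii)), there is a third branch $\alpha \ne 0$, $e \ne 0$, $2\beta = \alpha e$. This class is not excluded by adjunction; the paper eliminates it by citing \cite[Remark, p122]{JeffRosoff2002}, which gives $|\mathcal{C}| = \{\mathcal{C}\}$ for such classes, so that the countably many such curves can be avoided by very general points. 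Without this rigidity statement (or a substitute), the equality dichotomy in Conjecture \ref{conjecture-2}(i) does not close.
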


\begin{proposition}\label{equivalence of conjectures 1 and 2}
Conjectures \ref{conjecture-1} and \ref{conjecture-2} are equivalent.

\begin{proof}
Suppose that Conjecture \ref{conjecture-1} is true.  
Let $\mathcal{C}$ be as mentioned in Conjecture \ref{conjecture-2}. Suppose that
\begin{eqnarray}\label{eq:1}
\mathcal{C}^2 < \sum\limits_{i=1}^r m_i^2 -1.
\end{eqnarray} 
Then ${\widetilde{\mathcal{C}}}^2 = \mathcal{C}^2 - \sum\limits_{i=1}^r m_i^2 < -1$. This implies that $\widetilde{\mathcal{C}}$ is a negative self-intersection curve in $X_r$. So, by Conjecture \ref{conjecture-1}, we have 
\begin{enumerate}
\item[(i)] $ \mathcal{C}^2 \le 0$ or
\item[(ii)] $\widetilde{\mathcal{C}} = \widetilde{\Gamma_e}$.
\end{enumerate}

Now, as $\mathcal{C}$ passes through at least one $p_i$, by \cite{Ein-Lazarsfeld1993}, 
\begin{eqnarray}\label{eq:2}
\mathcal{C}^2 \ge 0.
\end{eqnarray}
We now consider both the cases separately.

\noindent
\textbf{Case (i):} $\mathcal{C}^2 \le 0$. 

This, along with \eqref{eq:2} implies that $\mathcal{C}^2 = 0$. So, again by \cite{Ein-Lazarsfeld1993}, $m_i \le 1$ for each $i$. We also have 
\begin{equation} \label{Xu's like inequality}
    \mathcal{C}^2 \ge \sum\limits_{i =1}^r m_i^2  -m,
\end{equation}
where $m = \min \{ m_i \; \vert \; m_i \neq 0 \}$ (see \cite{Ein-Lazarsfeld1993} and \cite[Lemma 1]{Xu1994}). Note that  $m=1$. This gives 
$$\mathcal{C}^2 \ge \sum\limits_{i=1}^r m_i^2 -1,$$ which contradicts \eqref{eq:1}.

\noindent
\textbf{Case (ii):} $\widetilde{\mathcal{C}} = \widetilde{\Gamma_e}$. As $\Gamma_e$ is smooth, by \eqref{Xu's like inequality}, $\mathcal{C}^2 \ge \sum\limits_{i=1}^r m_i^2 -1$, which contradicts \eqref{eq:1}. 

So we conclude that 
\begin{eqnarray}\label{eq:3}
\mathcal{C}^2 \ge \sum\limits_{i=1}^r m_i^2 -1.
\end{eqnarray}
Now, suppose that the equality holds in \eqref{eq:3}. That is, $\mathcal{C}^2 = \sum\limits_{i=1}^r m_i^2 -1$. This means that $\widetilde{\mathcal{C}}^2 = -1$. So, by Conjecture \ref{conjecture-1}, $\widetilde{\mathcal{C}}$ could be of the types (i), (ii) or (iii). If $\widetilde{\mathcal{C}}$ is a $(-1)$-curve or $\widetilde{\mathcal{C}} = \widetilde{\Gamma_e}$, we are done with the proof, irrespective of the value of $e$. Suppose that $\widetilde{\mathcal{C}}$ is the strict transform of a curve $\mathcal{C}$ such that $\mathcal{C}^2 \le 0$. In this case, by \eqref{eq:2}, $\mathcal{C}^2 = 0$. Hence $\sum\limits_{i=1}^r m_i^2 =1$. So there exists $i$ such that $m_i = 1$ and $m_j = 0$ for $j\ne i$. 
%further means that only one $m_i$ is 1 and all other $m_j$'s are zero. \\ 

Now, let $\mathcal{C} = \alpha \Gamma_e + \beta f$. Then $\mathcal{C}^2 = 0$ implies that 
		\begin{eqnarray*}
			&&2 \alpha \beta - \alpha^2 e =0 \\
			&\implies &\alpha (2 \beta - \alpha e) =0  \\
			&\implies& \alpha =0 \; \; \text{or} \;  \; 2 \beta - \alpha e =  0 \\
			&\implies &\alpha =0 \; \; \text{or} \;\;   \beta =  e =  0 \;\; \text{or} \;\; \alpha \neq 0, e \neq 0, 2\beta = \alpha e. 
		\end{eqnarray*}

If $\alpha=0$, then $\mathcal{C} = f$, and in this case $\widetilde{\mathcal{C}}$ is a $(-1)$-curve irrespective of $e$. So we are done. If $\beta = e = 0$, then $\mathcal{C} = \alpha\Gamma_e$, and we are done. Finally, if $\alpha \neq 0$, $\beta \neq 0$ and $2\beta = \alpha e$, then $\mathcal{C} = \alpha \Gamma_e + \frac{1}{2} \alpha e f$. Now, by \cite[Remark, p122]{JeffRosoff2002}, for such a curve $\mathcal{C}$, $\mid \mathcal{C} \mid = \{ \mathcal{C} \}$. So such curves $\mathcal{C}$ form a countable collection. As the points $p_1, \dots , p_r$ are very general, they can be chosen to be outside all those curves $\mathcal{C}$. This means that we do not have to consider such curves. This completes the proof of Conjecture \ref{conjecture-2} assuming Conjecture \ref{conjecture-1}. \\

Conversely, suppose that Conjecture \ref{conjecture-2} is true. Let $C \subset X_r$ be a reduced irreducible curve with $C^2 < 0$. We consider different cases separately. \\

\noindent
\textbf{Case (i):} If $C$ is an exceptional divisor, then it is a $(-1)$-curve. So we are done. \\

\noindent
\textbf{Case (ii):} If $\pi(C) = \mathcal{C}$ and $\mathcal{C}$ does not  pass through any of the points $p_1, \dots , p_r$, then $0 > C^2 = \mathcal{C}^2$. This means that $C = \widetilde{\mathcal{C}}$ with $\mathcal{C}^2 <0$. So we are done. \\

\noindent 
\textbf{Case (iii):} Suppose that $\pi (C) = \mathcal{C}$ passes through at least one $p_i$ and that $m_i = \text{mult}_{p_i}\mathcal{C}$. Then, by Conjecture \ref{conjecture-2},
\[
\mathcal{C}^2 \ge \sum\limits_{i =1}^r m_i^2 -1. 
\]
This implies that
\[
C^2 = \mathcal{C}^2 - \sum\limits_{i =1}^r m_i^2   \ge -1.
\] 
As $C^2 < 0$, this means $C^2 = -1$. Therefore $\mathcal{C}^2 = \sum\limits_{i =1}^r m_i^2 -1$. So, again by Conjecture \ref{conjecture-2}, if $e \neq 0$, $C = \widetilde{\mathcal{C}}$ is a $(-1)$-curve or $\mathcal{C} = \Gamma_e$, and if $e = 0$, $C = \widetilde{\mathcal{C}}$ is a $(-1)$-curve or $\mathcal{C} = \alpha \Gamma_e$, where $\alpha \ge 1$.  In the former case, this implies that $C$ is of types (i) or (iii) as in Conjecture \ref{conjecture-1}. In the latter case, $C$ is of type (ii) if $\mathcal{C} = \alpha \Gamma_e$ where $\alpha > 1$, and of types (i) and (iii) otherwise. This completes the proof.
\end{proof}
\end{proposition}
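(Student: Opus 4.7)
The plan is to prove the two implications separately, leaning on two standard facts throughout: the Ein--Lazarsfeld bound, which says $\mathcal{C}^2 \geq 0$ for any reduced irreducible curve through a very general point, and the Xu-type refinement $\mathcal{C}^2 \geq \sum m_i^2 - m$ with $m = \min\{m_i : m_i \neq 0\}$. The bridge between the two conjectures is the identity $\widetilde{\mathcal{C}}^2 = \mathcal{C}^2 - \sum m_i^2$, which directly converts the statement ``$\widetilde{\mathcal{C}}^2 < -1$'' into ``$\mathcal{C}^2 < \sum m_i^2 - 1$'' and vice versa.

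For the direction Conjecture \ref{conjecture-1} $\Rightarrow$ Conjecture \ref{conjecture-2}, I would argue by contradiction. Assuming $\mathcal{C}^2 < \sum m_i^2 - 1$ yields $\widetilde{\mathcal{C}}^2 < -1$, so Conjecture \ref{conjecture-1} forces $\widetilde{\mathcal{C}}$ to be either the strict transform of a curve $\mathcal{C}'$ on $X$ with $(\mathcal{C}')^2 \leq 0$ (the only option giving self-intersection $<-1$ on $X_r$ that is not a $(-1)$-curve) or $\widetilde{\Gamma_e}$. Ein--Lazarsfeld pins $\mathcal{C}^2$ to $0$ and forces every $m_i \leq 1$ in the first sub-case, and Xu's inequality with $m = 1$ then produces the desired bound, giving a contradiction. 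The $\widetilde{\Gamma_e}$ sub-case is handled identically, exploiting the smoothness of $\Gamma_e$.

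For the equality case $\mathcal{C}^2 = \sum m_i^2 - 1$, the translation gives $\widetilde{\mathcal{C}}^2 = -1$, and Conjecture \ref{conjecture-1} leaves three possibilities. The $(-1)$-curve and $\widetilde{\Gamma_e}$ cases match the conclusion directly; the remaining case collapses via Ein--Lazarsfeld to $\mathcal{C}^2 = 0$ with a unique $m_i$ equal to $1$. Writing $\mathcal{C} \equiv \alpha \Gamma_e + \beta f$, the equation $\mathcal{C}^2 = 0$ factors as $\alpha(2\beta - \alpha e) = 0$, producing the sub-cases $\mathcal{C} = f$ (which blows up to a $(-1)$-curve), $\mathcal{C} = \alpha \Gamma_e$ with $e = 0$, and the exceptional family $\mathcal{C} \equiv \alpha \Gamma_e + \tfrac{1}{2}\alpha e f$ with $\alpha, e \neq 0$. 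For this last sub-case I would invoke a rigidity result ensuring that $|\mathcal{C}|$ is a single point, hence the curves form a countable collection, so the very general position of the $p_i$ lets us rule them out.

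For the reverse direction, I would split on the nature of $C \subset X_r$ with $C^2 < 0$: if $C$ is exceptional it is a $(-1)$-curve; if $C = \widetilde{\mathcal{C}}$ where $\mathcal{C}$ avoids every $p_i$, then $C^2 = \mathcal{C}^2 < 0$ matches case (ii) of Conjecture \ref{conjecture-1}; otherwise $\mathcal{C}$ passes through some $p_i$, Conjecture \ref{conjecture-2} gives $\mathcal{C}^2 \geq \sum m_i^2 - 1$, and the identity forces $C^2 = -1$. The equality clause of Conjecture \ref{conjecture-2} then reads off the classification, with the $e = 0$ option producing case (ii) via $\mathcal{C} = \alpha \Gamma_e$ for $\alpha > 1$. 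The main obstacle I anticipate is the equality step of the forward direction: handling the numerically degenerate curves $\mathcal{C} \equiv \alpha \Gamma_e + \tfrac{1}{2}\alpha e f$ that are neither $(-1)$-curves nor multiples of $\Gamma_e$ when $e = 0$. Identifying the correct rigidity statement and confirming that such curves genuinely form a countable family (so the very general hypothesis kills them) is the delicate point.
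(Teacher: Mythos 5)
Your proposal is correct and follows essentially the same route as the paper's proof: the same contradiction argument via $\widetilde{\mathcal{C}}^2 = \mathcal{C}^2 - \sum m_i^2$, the same use of the Ein--Lazarsfeld and Xu-type bounds, the same case analysis from $\alpha(2\beta - \alpha e) = 0$, and the same rigidity/countability argument (the paper cites Rosoff) to dispose of the curves $\alpha\Gamma_e + \tfrac{1}{2}\alpha e f$. The delicate point you flag at the end is exactly the one the paper handles with \cite[Remark, p122]{JeffRosoff2002}, so no gap remains.
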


We now prove that Conjecture \ref{conjecture-1} for any ruled surface implies a statement similar to the Nagata-Biran-Szemberg conjecture, which is a more general statement for any smooth projective surface. We first state the Nagata-Biran-Szemberg Conjecture in its original form (see \cite[Section 4.1]{Sze}).

\begin{conjecture}[Nagata-Biran-Szemberg Conjecture] \label{Nagata Biran Szemberg Conjecture}

Let $Y$ be a smooth projective surface and $q_1, \dots , q_r$ be $r$ very general points on $Y$. Let $\mathcal{L}$ be an ample line bundle on $Y$. 
%\textcolor{red}{and $\mathcal{C}$ a reduced irreducible curve passing through at least one $q_i$ with $\text{mult}_{q_i}\mathcal{C} = n_i$. }
Then 
\begin{equation}\label{NBS} 
\varepsilon(Y,\mathcal{L},r) = \sqrt{\frac{\mathcal{L}^2}{r}}, \quad \quad \forall \; r \ge k_0^2 \mathcal{L}^2,
\end{equation}
where $\varepsilon(Y,\mathcal{L},r)$ is the multi-point Seshadri constant of $\mathcal{L}$ at $q_1, \dots, q_r$ and $k_0$ is the least positive integer such that $\mid k_0 \mathcal{L} \mid$ has a non-rational non-singular curve.
\end{conjecture}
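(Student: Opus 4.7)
The plan is to derive from Conjecture \ref{conjecture-1} (equivalently, Conjecture \ref{conjecture-2}, via the preceding proposition) a Nagata-Biran-Szemberg-type lower bound for $\varepsilon(X,\mathcal{L},r)$ when $\mathcal{L}$ is ample and $r$ is sufficiently large, matching the upper bound $\sqrt{\mathcal{L}^2/r}$, which is classical and unconditional. I would work with Conjecture \ref{conjecture-2}, since it bounds $\mathcal{C}^2$ directly in terms of the multiplicities $m_i = \mathrm{mult}_{p_i}\mathcal{C}$ that enter the Seshadri constant.

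For the lower bound, let $\mathcal{C} \subset X$ be an irreducible curve passing through at least one $p_i$. Conjecture \ref{conjecture-2} gives $\mathcal{C}^2 \ge \sum m_i^2 - 1$, and the Hodge index theorem (using $\mathcal{L}^2 > 0$) gives $(\mathcal{L}\cdot\mathcal{C})^2 \ge \mathcal{L}^2\cdot\mathcal{C}^2$. Combining these with the Cauchy-Schwarz inequality $(\sum m_i)^2 \le r\sum m_i^2$ yields
$$\left(\frac{\mathcal{L}\cdot\mathcal{C}}{\sum m_i}\right)^{\!2} \;\ge\; \frac{\mathcal{L}^2}{r} \;-\; \frac{\mathcal{L}^2}{\bigl(\sum m_i\bigr)^2},$$
which already produces NBS asymptotically as $\sum m_i \to \infty$ and controls all but a bounded set of numerical classes.

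To close the residual gap, I would argue by contradiction. If some $\mathcal{C}$ were Seshadri-submaximal, then the chain above forces $\mathcal{C}^2 < \sum m_i^2$, and since $\mathcal{C}^2\in\mathbb{Z}$ and $\mathcal{C}^2 \ge \sum m_i^2 - 1$, we obtain the exact equality $\mathcal{C}^2 = \sum m_i^2 - 1$. The equality clause of Conjecture \ref{conjecture-2} then forces $\widetilde{\mathcal{C}}$ to be either a $(-1)$-curve on $X_r$, the strict transform of $\Gamma_e$ (if $e \ne 0$), or of $\alpha\Gamma_e$ with $\alpha \ge 1$ (if $e = 0$). The section cases are disposed of by very general position of the $p_i$: they cannot lie on the rigid curve $\Gamma_e$, so the relevant multiplicities vanish and the case does not arise.

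The main obstacle will be the $(-1)$-curve case: here $\widetilde{\mathcal{C}}$ is rational, $\widetilde{\mathcal{C}}^2 = -1$, the multiplicities can be nearly equal, and the Hodge bound is tight to within a single unit. Eliminating this case is precisely what the hypothesis $r \ge k_0^2 \mathcal{L}^2$ is designed for, and I would expect a dimension-count argument on families of rational curves on $X$, combined with the very general choice of $p_1,\dots,p_r$ and the defining property of $k_0$ (that $|k_0\mathcal{L}|$ must contain a non-rational smooth curve), to rule out the existence of such a Seshadri-submaximal $\mathcal{C}$. This is genuinely the hard step; the rest is Hodge index and Cauchy-Schwarz.
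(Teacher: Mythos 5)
The statement you are asked about is Conjecture \ref{Nagata Biran Szemberg Conjecture} itself: the paper does not prove it, and cannot, since it is the general Nagata--Biran--Szemberg conjecture for an \emph{arbitrary} smooth projective surface $Y$. Conjectures \ref{conjecture-1} and \ref{conjecture-2}, which you take as your starting point, are statements about ruled surfaces only, so the very first move of your proposal already restricts to a special case. What the paper actually establishes is Proposition \ref{NBS analog in our case}: assuming Conjecture \ref{conjecture-1}, the inequality \eqref{NBS inequality} holds on a ruled surface for all sufficiently large $r$, with an explicit threshold depending on $e$, $g$ and $\mathcal{L}$ (and not on $k_0$ at all). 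Your opening reduction --- squaring the submaximality inequality, applying Cauchy--Schwarz and the Hodge index theorem to get $\mathcal{C}^2 < \sum m_i^2$, hence $\widetilde{\mathcal{C}}^2 < 0$, and then invoking the conjecture to land in the three exceptional cases --- does match the paper's strategy for that proposition.

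The genuine gap is in how you dispose of the exceptional cases. For the $(-1)$-curve case you defer to a hoped-for ``dimension-count argument on families of rational curves'' tied to the definition of $k_0$; no such argument is given, and it is not what makes the proof work. The paper's mechanism is adjunction: if $\widetilde{\mathcal{C}}$ is a $(-1)$-curve then $-K_{X_r}\cdot\widetilde{\mathcal{C}} = 1$, which expresses $\sum n_i = 2\alpha + 2\beta - \alpha e - 2\alpha g - 1$ as an explicit linear form in the class $(\alpha,\beta)$ of $\mathcal{C}$. Feeding this into the violating inequality \eqref{violating inequality} bounds $\sqrt{r/\mathcal{L}^2}$ above by a constant (a lengthy but elementary case analysis on the sign of $e$ and on $\alpha,\beta$), so choosing $r$ beyond an explicit threshold rules out every such curve at once; no moduli of rational curves and no appeal to $k_0$ is needed. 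Your dismissal of the section case is also too quick: $\Gamma_e$ is rigid only when $e>0$ (or when $e\le 0$ and $X\neq\Gamma\times\mathbb{P}^1$, where $h^0(\mathcal{O}_X(\Gamma_e))=1$); when $e\le 0$ in general, and in particular for $X=\Gamma\times\mathbb{P}^1$, the section moves, and the paper must bound $\sum n_i$ via the Xu-type inequality \eqref{Xu's like inequality} and derive a separate explicit lower bound on $r$. Finally, note that your asymptotic inequality involving $\mathcal{L}^2/(\sum m_i)^2$ is correct but does not by itself yield \eqref{NBS} for any fixed $r$; the whole content is in closing that unit gap, which is exactly the part your proposal leaves open.
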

\begin{remark}
By definition of Seshadri constants, \eqref{NBS} is equivalent to the inequality
\[
\mathcal{L} \cdot \mathcal{C} \ge \left(\sum\limits_{i=1}^r n_i\right) \sqrt{\frac{\mathcal{L}^2}{r}}, \quad \quad \forall \; r \ge k_0^2 \mathcal{L}^2,
\]
where $\mathcal{C}$ is any reduced irreducible curve in $Y$ with $n_i = \text{mult}_{q_i}\mathcal{C}$. 
%We assume that $n_i > 0$ for some $i$. 
\end{remark}

\begin{proposition} \label{NBS analog in our case}
    Let $X$ be a ruled surface and let $p_1, \dots, p_r \in X$ be very general points. Let $\mathcal{L} = a \Gamma_e + b f$ be a fixed ample line bundle on $X$. Suppose that Conjecture \ref{conjecture-1} is true. Then the equality \eqref{NBS} holds for sufficiently large $r$. 
    
More precisely, for a sufficiently large $r$, depending only on $\mathcal L$, the following holds: 
     for any reduced irreducible curve  $\mathcal{C} \subset X$, %$\text{mult}_{p_i}\mathcal{C} = n_i$, 
     we have
     %and for $r >> 0$, 
    \begin{equation} \label{NBS inequality}
        %\frac{\mathcal{L} \cdot \mathcal{C}}{\sum\limits_{i=1}^r n_i} \geq \sqrt{\frac{\mathcal{L}^2}{r}}.
	\mathcal{L} \cdot \mathcal{C}\geq \left({\sum\limits_{i=1}^r \text{mult}_{p_i}\mathcal{C}} \right)\sqrt{\frac{\mathcal{L}^2}{r}}.
    \end{equation}
\end{proposition}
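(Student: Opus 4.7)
The plan is to combine three ingredients: the Hodge Index Theorem, which on the rank-two lattice $\text{Num}(X)$ spanned by $\Gamma_e$ and $f$ reduces (for $\mathcal{L} = a\Gamma_e + bf$ and any $\mathcal{C} = \alpha\Gamma_e + \beta f$) to the explicit identity
\[
(\mathcal{L}\cdot\mathcal{C})^2 - \mathcal{L}^2\mathcal{C}^2 = (a\beta - b\alpha)^2,
\]
which is checked by direct computation from $\Gamma_e^2 = -e$, $\Gamma_e\cdot f = 1$, $f^2 = 0$; the lower bound $\mathcal{C}^2 \geq \sum_{i=1}^r m_i^2 - 1$ supplied by Conjecture~\ref{conjecture-2} (with $m_i = \text{mult}_{p_i}\mathcal{C}$) whenever the reduced irreducible $\mathcal{C}$ meets some $p_i$; and the Cauchy--Schwarz inequality $\bigl(\sum_{i=1}^r m_i\bigr)^2 \leq r\sum_{i=1}^r m_i^2$. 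Squaring \eqref{NBS inequality} and rearranging, the target becomes
\[
\mathcal{L}^2 \Bigl( r\sum_{i=1}^r m_i^2 - \bigl(\sum_{i=1}^r m_i\bigr)^2 \Bigr) + r(a\beta - b\alpha)^2 \geq r\mathcal{L}^2,
\]
and the plan is to produce the required slack from one of the two non-negative terms on the left.

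First, if $\mathcal{C}$ misses every $p_i$ then \eqref{NBS inequality} is trivial, so assume $\mathcal{C}$ passes through some $p_i$. When the bound from Conjecture~\ref{conjecture-2} is strict, i.e.\ $\mathcal{C}^2 \geq \sum_{i=1}^r m_i^2$, the Hodge bound $(\mathcal{L}\cdot\mathcal{C})^2 \geq \mathcal{L}^2\sum_{i=1}^r m_i^2$ combined with Cauchy--Schwarz disposes of \eqref{NBS inequality} at once. So attention reduces to the equality case $\mathcal{C}^2 = \sum_{i=1}^r m_i^2 - 1$, in which Conjecture~\ref{conjecture-2} forces $\mathcal{C}$ to be $\Gamma_e$, $\alpha\Gamma_0$ on $\mathbb{F}_0$, or else $\widetilde{\mathcal{C}}$ to be a $(-1)$-curve.

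The first two exceptional types I would handle directly: since $\Gamma_e$ is rigid for $e > 0$ and a ruling on $\mathbb{F}_0$, and since $\alpha\Gamma_0$ admits no irreducible representative for $\alpha \geq 2$, a short check shows that $\sum_{i=1}^r m_i$ is at most a small absolute constant in each case. Hence the right-hand side of \eqref{NBS inequality} tends to $0$ as $r \to \infty$ while $\mathcal{L}\cdot\mathcal{C}$ stays a fixed positive constant, giving \eqref{NBS inequality} for $r$ large. The main case is when $\widetilde{\mathcal{C}}$ is a $(-1)$-curve; there adjunction $K_{X_r}\cdot\widetilde{\mathcal{C}} = -1$ produces the explicit relations
\[
\sum_{i=1}^r m_i = 2\beta - (e+2g-2)\alpha - 1, \qquad \sum_{i=1}^r m_i^2 = 2\alpha\beta - \alpha^2 e + 1.
\]
I would then split on whether $(a\beta - b\alpha)^2 \geq \mathcal{L}^2$: if yes, the term $r(a\beta - b\alpha)^2$ alone already exceeds the right-hand side; if no, the integer $a\beta - b\alpha$ lies in a finite range, confining $(\alpha,\beta)$ to finitely many lines, and the adjunction relations above pin the $(-1)$-curve data down tightly enough to allow a direct verification of \eqref{NBS inequality} for $r$ sufficiently large.

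The hard part will be this last sub-regime, where $\widetilde{\mathcal{C}}$ is simultaneously nearly proportional to $\mathcal{L}$ (so the identity-slack $(a\beta - b\alpha)^2$ is small) and has nearly-equal multiplicities (so the Cauchy--Schwarz defect is small). Here neither tool provides sufficient slack on its own, and the argument will have to use the adjunction equations above to show that such $(-1)$-curves can exist only for $r$ bounded by a constant depending on $\mathcal{L}$, $e$ and $g$; one then takes $r_0$ above this threshold to complete the proof.
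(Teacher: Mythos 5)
Your skeleton matches the paper's: assume a violating curve exists, square, use Hodge index and Cauchy--Schwarz to force $\widetilde{\mathcal{C}}^2<0$, invoke the conjecture (the paper uses Conjecture \ref{conjecture-1} directly; you use the equivalent Conjecture \ref{conjecture-2}) to reduce to the cases $\mathcal{C}^2\le 0$, $\mathcal{C}=\Gamma_e$, and $\widetilde{\mathcal{C}}$ a $(-1)$-curve, and handle the last via adjunction. Your identity $(\mathcal{L}\cdot\mathcal{C})^2-\mathcal{L}^2\mathcal{C}^2=(a\beta-b\alpha)^2$ is correct and your rearranged target inequality is right, as is the observation that $(a\beta-b\alpha)^2\ge\mathcal{L}^2$ finishes that branch immediately.

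However, there is a genuine gap: the sub-regime you yourself flag as ``the hard part'' --- $\widetilde{\mathcal{C}}$ a $(-1)$-curve with $(a\beta-b\alpha)^2<\mathcal{L}^2$ and nearly equal multiplicities --- is exactly where the entire quantitative content of the proposition lives, and you do not prove it; you only assert that the adjunction relations ``will have to'' bound $r$. This is the step that occupies most of the paper's proof (Sub-Cases (a), (b), (c)): from the violating inequality and adjunction one gets $\sqrt{r/\mathcal{L}^2}\,(-a\alpha e+a\beta+b\alpha)<2\alpha+2\beta-\alpha e-2\alpha g-1$, and then one must show the right side is at most a bounded multiple of the left, which requires the numerical constraints on irreducible classes on a ruled surface ($\beta\ge\alpha e$ when $e>0$, $\beta\ge\alpha e/2$ when $e<0$, $\alpha,\beta\ge 0$ when $e=0$, plus separate treatment of $\alpha=0$ and $\beta=0$). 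None of these inputs appear in your sketch, and your ``finitely many lines $a\beta-b\alpha=c$'' reduction does not by itself conclude, since $\alpha$ is unbounded on each line and you still need a uniform lower bound on $\mathcal{L}\cdot\mathcal{C}$ relative to $\sum m_i$ there. Two smaller points: your claim that the $e=0$ equality case only occurs for $\alpha\Gamma_0$ on $\mathbb{F}_0$ is too narrow (Conjecture \ref{conjecture-2}(ii) concerns any ruled surface with $e=0$, where irreducible multisections in class $\alpha\Gamma_e$ can exist; the paper instead bounds $\sum m_i\le 1$ via the Xu-type inequality), and for $\mathcal{C}=\Gamma_e$ with $e<0$ the bound on $\sum m_i$ is $1-e$, a constant depending on $e$, not an absolute one --- harmless, but it should be stated correctly since the final threshold $r_0$ depends on it.
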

\begin{proof}
    Suppose that Conjecture \ref{conjecture-1} is true and that there exists a curve $\mathcal{C} = \alpha \Gamma_e + \beta f$ on $X$ passing through some $p_i$ such that for every positive integer $r$
    \begin{equation} \label{violating inequality}
        \mathcal{L} \cdot \mathcal{C}< \left(\sum\limits_{i=1}^r n_i\right)\sqrt{\frac{\mathcal{L}^2}{r}},
    \end{equation} where $n_i = \text{mult}_{p_i}\mathcal{C}$ for $1 \le i \le r$. Since $\mathcal L$ is ample, we must have $n_i > 0$ for some $i$. 
    
 By squaring the inequality \eqref{violating inequality} on both sides, then using Cauchy-Schwarz inequality and the fact that $(\mathcal{L} \cdot \mathcal{C})^2 \geq \mathcal{L}^2 \mathcal{C}^2$ (Hodge index theorem),  we get $\mathcal{C}^2 < \sum\limits_{i=1}^r n_i^2$. Now, if $\widetilde{\mathcal{C}}$ denotes the strict transform of $\mathcal{C}$ in $X_r$, then since $\widetilde{\mathcal{C}}^2 = \mathcal{C}^2 - \sum\limits_{i=1}^r n_i^2$, we can conclude that $\widetilde{\mathcal{C}}^2 < 0$ in $X_r$. So, by Conjecture \ref{conjecture-1}, we have 
 \begin{enumerate}
     \item[(i)] $\widetilde{\mathcal{C}}$ is a $(-1)$-curve, or 
     \item[(ii)]  $\mathcal{C}^2 \leq 0$, or
     \item[(iii)]  $\mathcal{C} = \Gamma_e$.
 \end{enumerate}
 
 We consider each case separately.

 \noindent \textbf{Case (i) :} Suppose that $\widetilde{\mathcal{C}}$ is a $(-1)$-curve. 
 So $-K_{X_r} \cdot \widetilde{\mathcal{C}} =  1$. 
 This implies the following: 
\[   (2 H_e + (e+2-2g)F_e - \sum\limits_{i=1}^r E_i) \cdot (\alpha H_e + \beta F_e - \sum\limits_{i=1}^r n_i E_i)  =  1 \nonumber \]
\begin{eqnarray*}
    \implies 2 \alpha + 2 \beta - \alpha e - 2 \alpha g -1  &=&  \sum\limits_{i=1}^r n_i\nonumber \\ 
    & > & \sqrt{\frac{r}{\mathcal{L}^2}}~(\mathcal{L} \cdot \mathcal{C})~~~\text{by}~\eqref{violating inequality} \nonumber \\
    & = & \sqrt{\frac{r}{\mathcal{L}^2}}~ (a \Gamma_e + b f) \cdot (\alpha \Gamma_e + \beta f) \nonumber \\
    & = & \sqrt{\frac{r}{\mathcal{L}^2}}~(-a \alpha e + \beta a + b \alpha).
 \end{eqnarray*}
 
 From the above inequalities, we conclude that if there is a curve violating \eqref{NBS inequality}, we must have the following:
 \begin{equation}\label{violating inequality for a -1-curve}
     \sqrt{\frac{r}{\mathcal{L}^2}}~(-a \alpha e + \beta a + b \alpha) ~ < ~ 2 \alpha + 2 \beta - \alpha e - 2 \alpha g -1.
 \end{equation}

 We now show that if $r$ is chosen to be sufficiently large, \eqref{violating inequality for a -1-curve} does not hold. We consider 
various sub-cases depending on $e$. \\

 \noindent \textbf{Sub-Case (a):} Suppose that $e > 0$. Since $b > ae$,  the inequality \eqref{violating inequality for a -1-curve} implies 
 \begin{equation} \label{violating inequality for e > 0 case}
     \sqrt{\frac{r}{\mathcal{L}^2}}~ a \beta ~ < ~ 2 \alpha + 2 \beta - \alpha e - 2 \alpha g -1.
 \end{equation}
 Now if $\beta = 0$, then by \cite[Proposition 2.20, Chapter V]{Hartshorne1977}, $\mathcal{C} = \Gamma_e$, which we handle separately in \textbf{Case (iii)}. So, again by \cite[Proposition 2.20, Chapter V]{Hartshorne1977}, we can assume $\beta > 0$. As $a > 0$, multiplying both sides of the inequality \eqref{violating inequality for e > 0 case} by $\frac{1}{a \beta}$, we have 
 \begin{eqnarray*}
     \sqrt{\frac{r}{\mathcal{L}^2}} & < & \frac{2 \alpha}{a \beta} + \frac{2}{a} - \frac{\alpha e}{a \beta} - \frac{2 \alpha g}{a \beta} - \frac{1}{a \beta} \nonumber \\
     & < & 2 + \frac{\alpha}{a \beta} (2 - e - 2g) \nonumber \\
     & \leq & 2 + \frac{1}{ae} (2 - e - 2g) ~~~~ \text{as}~ \beta \geq \alpha e,~\text{and so}~\frac{\alpha}{\beta} \leq \frac{1}{e} \nonumber \\
     & < & 2 + (2 -e - 2g) \nonumber \\
     & = & 4 - e - 2g.
 \end{eqnarray*}
Hence $r < (4 - e - 2g) ^ 2 \mathcal{L}^2$. So we can conclude that for all $r \geq (4 - e - 2g) ^ 2 \mathcal{L}^2$ and for all reduced irreducible curves $\mathcal{C}$, the inequality \eqref{NBS inequality} holds. \\
 
 \noindent \textbf{Sub-Case (b):} Suppose that $e < 0$. As $b > \frac{ae}{2}$, the inequality \eqref{violating inequality for a -1-curve} implies
 \begin{equation} \label{violating inequality for e < 0 case}
     \sqrt{\frac{r}{\mathcal{L}^2}}~(a (\beta - \frac{\alpha e}{2})) ~ < ~ 2 \alpha + (2 \beta - \alpha e) - 2 \alpha g -1.
 \end{equation}
 Again, if $\beta = 0$, then for $\alpha = 1$, we have $\mathcal{C} = \Gamma_e$, which we handle in \textbf{Case (iii)}. If $\beta = 0$ and $\alpha > 1$, then the inequality \eqref{violating inequality for e < 0 case} reduces to
 \begin{eqnarray*}
      \sqrt{\frac{r}{\mathcal{L}^2}} (-a \frac{\alpha e}{2}) & < & 2 \alpha - \alpha e - 2 \alpha g  \\ \implies \sqrt{\frac{r}{\mathcal{L}^2}} & < & \frac{4}{-ae} + 2 \\
      & \le & 6.
 \end{eqnarray*}
 This implies that for all $r \geq 36 \mathcal{L}^2$, we have the desired inequality.
 
 Now suppose that $\beta \neq 0$. If $\alpha = 0$, then $\mathcal{C} = f$. In this case, the inequality \eqref{violating inequality for e < 0 case} implies $\sqrt{\frac{r}{\mathcal{L}^2}} ~ < ~ \frac{1}{a} ~ \le ~ 1$. Therefore, for all $r \geq \mathcal{L}^2$, the inequality \eqref{NBS inequality} is true in this case. 

 Now, if $\alpha = 1$ and $\beta > 0$, \eqref{violating inequality for e < 0 case} gives 
 \begin{eqnarray*}
     \sqrt{\frac{r}{\mathcal{L}^2}}~(a (\beta - \frac{e}{2})) & < &  2  + (2 \beta -  e) - 2g -1 \nonumber \\
     \implies \sqrt{\frac{r}{\mathcal{L}^2}} & < & \frac{(2 - 2g - 1)}{a(\beta - \frac{e}{2})} + \frac{2}{a}.
 \end{eqnarray*}
 As $\beta - \frac{e}{2} \ge 1$, the above inequality gives
 \begin{equation*}
     \sqrt{\frac{r}{\mathcal{L}^2}} ~ < ~ \frac{1}{a} + \frac{2}{a} ~ \leq ~ 3.
 \end{equation*}
 This implies, for all $r \geq 9 \mathcal{L}^2$, the inequality \eqref{NBS inequality} holds in this case.

 If $\alpha \geq 2$ and $\beta > \frac{\alpha e}{2}$, we multiply both sides of the inequality \eqref{violating inequality for e < 0 case} by $\frac{1}{a(\beta - \frac{\alpha e}{2})}$ to get 
 \begin{equation*}
     \sqrt{\frac{r}{\mathcal{L}^2}} ~ < ~ \frac{2}{a} \cdot \frac{2\alpha}{2\beta - \alpha e} + \frac{2}{a}.
 \end{equation*}
 Now, if $\beta \geq 0$ and $e < -1$, we can conclude that $\frac{2\alpha}{2\beta - \alpha e} \leq 1$. Also, if $\beta \geq 0$ and $e = -1$, we can conclude that $\frac{2\alpha}{2\beta - \alpha e} \leq 2$. Therefore we have 
 \begin{equation*}
     \sqrt{\frac{r}{\mathcal{L}^2}} ~ < ~ \frac{4}{a} ~ \leq ~ 4, ~\text{if}~e < -1   
 \end{equation*}
 \text{and}
 \begin{equation*}
     \sqrt{\frac{r}{\mathcal{L}^2}} ~ < ~ \frac{6}{a} ~ \le ~ 6, ~\text{if}~e =-1.
 \end{equation*}
 This implies that if $e < -1$, then for all $r \geq 16 \mathcal{L}^2$, the inequality \eqref{NBS inequality} holds and if $e = -1$, for all $r \geq 36 \mathcal{L}^2$, the inequality \eqref{NBS inequality} holds. 

 Finally, if $\beta < 0$, using the fact that $\beta \geq \frac{\alpha e}{2}$, the inequality \eqref{violating inequality for a -1-curve} implies
 \begin{eqnarray*}
     \sqrt{\frac{r}{\mathcal{L}^2}} (\alpha(b-\frac{ae}{2})) & < & (2-e)\alpha ~~~~%(\text{other terms being negative, can be excluded}) 
     \nonumber \\
     \implies \sqrt{\frac{r}{\mathcal{L}^2}} & < & \frac{2-e}{b - \frac{ae}{2}} \\
     & \le  & 4-2e, ~~~~ (\text{as}~b-\frac{ae}{2} \ge \frac{1}{2}).
 \end{eqnarray*}
 Therefore, for all $r \geq (4-2e)^2 \mathcal{L}^2$, we get the inequality \eqref{NBS inequality}. \\

 \noindent \textbf{Sub-Case (c):} Suppose that $e =0$. The inequality \eqref{violating inequality for a -1-curve} then implies
 \begin{equation*}
     \sqrt{\frac{r}{\mathcal{L}^2}}~(a \beta + b \alpha) ~ < ~ 2 \alpha + 2 \beta - 2 \alpha g - 1.
 \end{equation*}
 We know $\alpha \geq 0$, $\beta \geq 0$. First, assume both $\alpha$ and $\beta$ are positive.  Multiplying the above inequality by $\frac{1}{a \beta + b \alpha}$ and excluding the negative terms on the right hand side, we have
 \begin{eqnarray*}
     \sqrt{\frac{r}{\mathcal{L}^2}} & < & \frac{2 \alpha}{a \beta + b \alpha} + \frac{2 \beta}{a \beta + b \alpha} \\
     & < & \frac{2 \alpha}{b \alpha} + \frac{2 \beta}{a \beta} \\
     & \le  & 4.
 \end{eqnarray*}
 This implies that if $\alpha\ne 0, \beta \neq 0$, then for all $r \geq 16 \mathcal{L}^2$, the inequality \eqref{NBS inequality} holds. 
 
 If $\alpha = 0$, then $\beta =1$. In this case, the inequality \eqref{violating inequality for a -1-curve} reduces to $\sqrt{\frac{r}{\mathcal{L}^2}} < \frac{1}{a} \le 1$. So, in this case, for all $r \geq \mathcal{L}^2$, we get the desired	 result. And finally, if $\beta = 0$, then 
 \begin{eqnarray*}
     \sqrt{\frac{r}{\mathcal{L}^2}}~(b \alpha) & < & 2 \alpha - 2 \alpha g - 1  <  2 \alpha \\
     \implies \sqrt{\frac{r}{\mathcal{L}^2}} & < & \frac{2}{b} \le  2.
 \end{eqnarray*}
 So, for all $r \geq 4 \mathcal{L}^2$, we get the desired result. \\

 \noindent \textbf{Case (ii):} Suppose that $\mathcal{C}^2 \leq 0$. By \cite{Ein-Lazarsfeld1993}, $\mathcal{C}^2 \geq m(m-1)$, where $m$ is the multiplicity of $\mathcal{C}$ at a very general point. Now since  $p_i$ are very general, it follows that  $\mathcal{C}^2 \ge 0$. So $\mathcal{C}^2 = 0$, which further implies $\alpha(2\beta-\alpha e) = 0$. This means $\alpha = 0$ or $2\beta-\alpha e = 0$. 
 
 If $\alpha = 0$, then $\mathcal{C} = f$, and so $\widetilde{\mathcal{C}} = \widetilde{f}$ is a $(-1)$-curve, which is handled in \textbf{Case (i)}. If $2\beta-\alpha e = 0$, this could mean $\beta = e = 0$, in which case $\mathcal{C} = \alpha\Gamma_e$, or, this could also mean $\beta \neq 0$, but $\beta = \frac{\alpha e}{2}$. 
 
 Now, suppose that $\beta = e = 0$ and $\mathcal{C} = \alpha\Gamma_e$. Then $\mathcal{C}^2 = \alpha^2\Gamma_e ^2 = 0$. So, by \eqref{Xu's like inequality} 
 \begin{equation*}
     \sum\limits_{i=1}^r n_i^2 - n = 0,
 \end{equation*}
  where $n = \text{min} \{n_i~|~n_i \neq 0\}$. This implies that $\sum\limits_{i=1}^r n_i^2 = n$, which further implies that 
  $n_i=1$ for some $i$ and $n_j =0$ for $j \ne i$. 
  %there exists exactly one $i$ such that $n_i =1$ and the remaining $n_j$'s are zero. 
  As $$\mathcal{L} \cdot \mathcal{C} = \mathcal{L} \cdot \alpha\Gamma_e = \alpha b, \;\; \mathcal{L}^2 = 2ab \text{ and } \sum\limits_{i=1}^r n_i = 1,$$ the inequality \eqref{violating inequality} becomes $\alpha b < \sqrt{\frac{2ab}{r}}$. This is equivalent to $r < \frac{2ab}{\alpha^2 b^2} \le \frac{2a}{b}$. So, if we choose $r \geq \frac{2a}{b}$, the inequality \eqref{NBS inequality} holds.  
 
 Suppose that $\beta \neq 0$ and $\beta = \frac{\alpha e}{2}$. Now, if $e > 0$, then $\beta \geq \alpha e$. So $\beta = \frac{\alpha e}{2}$ can happen only when $e < 0$. But then $\mathcal{C} = \alpha \Gamma_e + (\frac{\alpha e}{2})f$. So, by the arguments as in \textbf{Case (ii)} of Proposition \ref{equivalence of conjectures 1 and 2}, $\mathcal{C}$ cannot pass through a very general point. \\

 \noindent \textbf{Case (iii):} Suppose that $\mathcal{C} = \Gamma_e$. In this case, if $e > 0$, we have $\Gamma_e ^ 2 < 0$. So the linear system $|\mathcal{C}|$ is equal to $\{\mathcal{C}\}$. So $\mathcal{C}$ will not pass through a very general point $p_i$.

 If $e = 0$, then $\mathcal{C}^2 = \Gamma_e ^2 = 0$. So, by \eqref{Xu's like inequality},  
 \begin{equation*}
     \sum\limits_{i=1}^r n_i^2 - n = 0,
 \end{equation*}
  where $n = \text{min} \{n_i~|~n_i \neq 0\}$. This implies that $\sum\limits_{i=1}^r n_i^2 = n$, which further implies that there exists exactly one $i$ such that $n_i \neq 0$. In fact, for that particular $i$, $n_i =1$ as $\Gamma_e$ is smooth. As $\mathcal{L} \cdot \mathcal{C} = \mathcal{L} \cdot \Gamma_e = b$, $\mathcal{L}^2 = 2ab$ and $\sum\limits_{i=1}^r n_i = 1$, the inequality \eqref{violating inequality} becomes $b < \sqrt{\frac{2ab}{r}}$. This is equivalent to $r < \frac{2ab}{b^2} = \frac{2a}{b}$. So if we choose $r \geq \frac{2a}{b}$, the inequality \eqref{NBS inequality} holds. 

  Finally, if $e < 0$, by \eqref{Xu's like inequality}, we have $-e = \mathcal{C}^2 \geq \sum\limits_{i=1}^r n_i^2 - 1$. So  $1-e \geq \sum\limits_{i=1}^r n_i ^2 = \sum\limits_{i=1}^r n_i$ as each $n_i \leq 1$. Therefore $$\frac{\mathcal{L} \cdot \mathcal{C}}{\sum\limits_{i=1}^r n_i} = \frac{b - ae}{\sum\limits_{i=1}^r n_i} \geq \frac{b-ae}{1-e}.$$ If we choose $r$ such that $\frac{b - ae}{1-e} \geq \sqrt{\frac{\mathcal{L}^2}{r}}$, then the desired inequality holds. In other words, for all $r \geq (\frac{1-e}{b- ae})^2 \mathcal{L}^2$, the inequality \eqref{NBS inequality} is true. 
 \end{proof}

% \begin{remark}
%The value of $r$ for which the conclusion of Proposition \ref{NBS analog in our case} holds can be determined precisely from looking at the different cases in the proof. 
 %\end{remark}
 \begin{remark}\label{NBS for sL}
 %    Suppose that $\mathcal{L}$ is an ample line bundle on $X$ such that the inequality \eqref{NBS inequality} holds. Then it is not hard to see that for 
   %  every positive integer $s$, \eqref{NBS inequality} 
     %$ holds for $s \mathcal{L}$ as well. 
   Suppose that  Conjecture \ref{conjecture-1} is true and let $\mathcal L$ be an ample line bundle on $X$. By Proposition \ref{NBS analog in our case}, 
   there exists $r_0$ such that \eqref{NBS inequality} holds for all $r\ge r_0$. Then for every positive integer $s$ and $r\ge r_0$, the inequality \eqref{NBS inequality}
   also holds for $s\mathcal L$. 
%   we can conclude that if Conjecture \ref{conjecture-1} is true and $\mathcal{L}$ is an ample line bundle on $X$, then from the same $r >> 0$ from which the inequality \eqref{NBS inequality} holds for $\mathcal L$, it holds also for $s \mathcal{L}$  for every $s \in \mathbb{N}$. 
%This fact will be used later. 
 \end{remark}

\section{Seshadri constants}\label{section3}
In this section, we exhibit irrational Seshadri constants on general blow-ups of ruled surfaces assuming that Conjecture \ref{conjecture-1} is true. 

We start with a preliminary result giving a characterization of ampleness for \textit{uniform} line bundles (see the statement of Theorem \ref{ampleness criterion} below) on general blow-ups of arbitrary surfaces, assuming that Conjecture 
\ref{Nagata Biran Szemberg Conjecture} is true. 
For a smooth projective surface, we know by Nakai-Moishezon criterion that a line bundle $\mathcal{L}$ is ample if and only if $\mathcal{L}^2 > 0$ and $\mathcal{L} \cdot C > 0$ for every reduced irreducible curve $C$. However, the following theorem says that in some specific cases, the first condition mentioned in the above criterion  is itself equivalent to the ampleness of the line bundle.
% if $\pi: Y_r \rightarrow Y$ is the blow-up of $Y$ at $r$ very general points, $\mathcal{L}$ is an ample line bundle on $Y$ and the Conjecture \ref{Nagata Biran Szemberg Conjecture} holds for $\mathcal{L}$, the following theorem provides a simpler 
% Now, suppose that $\pi: Y_r \rightarrow Y$ is the blow-up of $Y$ at $r$ very general points. If $\mathcal{L}$ is an ample line bundle on $Y$, and the Conjecture \ref{Nagata Biran Szemberg Conjecture} holds for $\mathcal{L}$, then for $\widetilde{\mathcal{L}} = \pi^*\mathcal{L} - m(E_1+\dots+E_r)$ on $Y_r$, the following theorem says that $\widetilde{\mathcal{L}}^2 > 0$ is sufficient to conclude the ampleness of $\widetilde{\mathcal{L}}$.
 \begin{theorem}\label{ampleness criterion}
Let $Y$ be a smooth projective surface and $q_1, \dots , q_r$ be very general points on $Y$. Let $\pi: Y_r \rightarrow Y$ be the blow-up of $Y$ at $q_1, \dots , q_r$. Suppose that $\mathcal{L}$ is an ample line bundle on $Y$ such that  $\frac{\mathcal{L} \cdot \mathcal{C}}{\sum\limits_{i=1}^r n_i} \ge \sqrt{\frac{\mathcal{L}^2}{r}}$, for all reduced irreducible curves $\mathcal{C}$ with $n_i = \text{mult}_{q_i}\mathcal{C}$ and $n_i > 0$ for at least one $i$. Then for $\widetilde{\mathcal{L}} = \pi^*\mathcal{L} - m (E_1 + \cdots + E_r)$ with $m>0$, $\widetilde{\mathcal{L}}$ is ample if and only if ${\widetilde{\mathcal{L}}}^2 >0$.

\begin{proof}
If $\widetilde{\mathcal{L}}$ is ample, then obviously ${\widetilde{\mathcal{L}}}^2 >0$. Conversely, suppose that ${\widetilde{\mathcal{L}}}^2 >0$. It suffices to prove that $\widetilde{\mathcal{L}}\cdot C > 0$ for all reduced irreducible curves $C$ in $Y_r$. Again, we consider case by case.\\

\noindent
\textbf{Case (i):} Suppose that $C= E_i$ for some $i$. Then $\widetilde{\mathcal{L}}\cdot C = m >0$. \\

\noindent
\textbf{Case (ii):} Suppose that $\pi(C)$ does not pass through any $q_i$. Then $\widetilde{\mathcal{L}} \cdot C = \mathcal{L} \cdot \pi(C) >0$, as $\mathcal{L}$ is ample. \\

\noindent
\textbf{Case (iii):} Suppose that $\mathcal C  := \pi(C)$ passes through some $q_i$. 
%Let $\pi(C) =\mathcal{C}$. 
Then $C = \pi^* \mathcal{C} - \sum\limits_{i= 1}^r n_i E_i$, where $n_i = \text{mult}_{q_i} \mathcal{C}$. Therefore 
\begin{eqnarray}\label{eq:4}
\widetilde{\mathcal{L}} \cdot C = \mathcal{L} \cdot \mathcal{C} - m \sum\limits_{i=1}^r n_i.
\end{eqnarray}
As ${\widetilde{\mathcal{L}}}^2 > 0$, we have $\mathcal{L}^2 - r m^2>0$. Therefore
\begin{eqnarray}\label{eq:5}
\sqrt{\frac{\mathcal{L}^2}{r}} > m.
\end{eqnarray}
By hypothesis, we have $\frac{\mathcal{L} \cdot \mathcal{C}}{\sum\limits_{i=1}^r n_i} \ge \sqrt{\frac{\mathcal{L}^2}{r}}$. This, along with \eqref{eq:5}, implies that 
\begin{eqnarray}\label{eq:6}
\frac{\mathcal{L} \cdot \mathcal{C}}{\sum\limits_{i=1}^r n_i} > m \quad \quad \text{or} \quad \quad \mathcal{L}\cdot \mathcal{C} > m \sum\limits_{i=1}^r n_i.
\end{eqnarray}
Using \eqref{eq:6} in \eqref{eq:4}, we get $\widetilde{\mathcal{L}} \cdot C >0$. So $\widetilde{\mathcal{L}}$ is ample.
\end{proof}
\end{theorem}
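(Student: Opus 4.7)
The plan is to invoke the Nakai--Moishezon criterion for ampleness on the smooth projective surface $Y_r$: the line bundle $\widetilde{\mathcal{L}}$ is ample if and only if $\widetilde{\mathcal{L}}^2 > 0$ and $\widetilde{\mathcal{L}} \cdot C > 0$ for every reduced irreducible curve $C \subset Y_r$. The ``only if'' direction is immediate. For the converse, assuming $\widetilde{\mathcal{L}}^2 > 0$, I would verify the intersection condition by splitting into cases according to the image $\pi(C)$.

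First I would handle the easy cases. If $C = E_i$ is one of the exceptional divisors, then $\widetilde{\mathcal{L}} \cdot C = m > 0$ directly from the expression $\widetilde{\mathcal{L}} = \pi^*\mathcal{L} - m\sum E_i$ and the intersection relations $\pi^*\mathcal{L}\cdot E_i = 0$, $E_i \cdot E_j = -\delta_{ij}$. If $C$ is the strict transform of a curve $\mathcal{C} \subset Y$ that avoids all the $q_i$'s, then $C = \pi^*\mathcal{C}$ and $\widetilde{\mathcal{L}} \cdot C = \mathcal{L} \cdot \mathcal{C} > 0$ by the ampleness of $\mathcal{L}$ on $Y$.

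The core of the argument is the remaining case where $\mathcal{C} := \pi(C)$ passes through at least one of the $q_i$. Here $C = \pi^*\mathcal{C} - \sum_{i=1}^r n_i E_i$ with $n_i = \text{mult}_{q_i}\mathcal{C}$ and some $n_i > 0$, so
\[
\widetilde{\mathcal{L}} \cdot C \;=\; \mathcal{L} \cdot \mathcal{C} \;-\; m \sum_{i=1}^r n_i.
\]
The strategy is to combine two inequalities. From $\widetilde{\mathcal{L}}^2 = \mathcal{L}^2 - rm^2 > 0$ I get the bound $m < \sqrt{\mathcal{L}^2/r}$, and from the hypothesis I get $\mathcal{L} \cdot \mathcal{C} \ge \bigl(\sum n_i\bigr)\sqrt{\mathcal{L}^2/r}$. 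Chaining these yields $\mathcal{L} \cdot \mathcal{C} > m\sum n_i$, hence $\widetilde{\mathcal{L}} \cdot C > 0$, completing the case analysis.

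The only delicate point in this plan is bookkeeping the strictness of the inequalities: one needs $\widetilde{\mathcal{L}}^2 > 0$ (strict) so that $m < \sqrt{\mathcal{L}^2/r}$ is strict, while the hypothesis on $\mathcal{L}\cdot \mathcal{C}$ need only be non-strict for the product $\mathcal{L} \cdot \mathcal{C} > m\sum n_i$ to come out strict. No other subtlety is expected, since the classification of curves on a blow-up into the three types above is standard.
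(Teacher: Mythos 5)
Your proposal is correct and follows essentially the same route as the paper: the same three-case split (exceptional divisors, strict transforms of curves missing the $q_i$, strict transforms of curves through some $q_i$), and the same chaining of $m < \sqrt{\mathcal{L}^2/r}$ with the hypothesis to get $\mathcal{L}\cdot\mathcal{C} > m\sum n_i$. No issues.
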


%As before, let $\phi: X\to \Gamma$ be a ruled surface over a curve $\Gamma$ with invariant $e$. %Let $\phi: X \rightarrow \Gamma$ be the canonical map. 
%If $\mathcal{E} = \phi_{*}\mathcal{O}_X(\Gamma_e)$, it is well known that $\mathcal{E}$ is a rank two vector bundle on $\Gamma$ and that $X \cong \mathbb{P}(\mathcal{E})$. 

%By \cite[Lemma 17]{Garcia2005},  $h^0(\mathcal{O}_X(\Gamma_e)) = 2$, if $X = \Gamma \times \mathbb{P}^1$ and $h^0(\mathcal{O}_X(\Gamma_e)) = 1$, otherwise. 
As before, let $X \to \Gamma$ be a ruled surface and let $X_r$ be a blow-up of $X$ at $r$ very general points. Recall that $g$ denotes the genus of $\Gamma$. 

Suppose that $L = a H_e + b F_e - \sum\limits_{i=1}^r n_iE_i$ is a line bundle on $X_r$. For $1 \le i \le r$, let
\begin{equation*} %\label{definition of H_i}
    H_i' = 2 H_e + (e+2-2g) F_e - \sum\limits_{j=1}^i E_j.
\end{equation*}
Then $L$ can be re-written in the form 
\begin{eqnarray*}
    L &=& (a - 2n_1) H_e + (b- (e+2-2g)n_1) F_e + (n_1 - n_2) H_1' + \dots + \nonumber \\
    & &  (n_{r-1} - n_r) H_{r-1}' + n_r H_r'. 
\end{eqnarray*}
\begin{definition}\label{definition of good form}
    We say a line bundle $L = a H_e + b F_e - \sum\limits_{i=1}^r n_iE_i$ on $X_r$ is in \textit{good form } if the following conditions hold:
    \begin{enumerate}
        \item[(i)] $n_1\geq \dots \ge n_r \geq 0$,
        \item[(ii)] $a \geq 2n_1$,
        \item[(iii)] $b \geq (e+2-2g)n_1$ and
        \item[(iv)] $b \geq ae + n_1$, if $X = \Gamma \times \mathbb{P}^1$ and $b\ge ae$, otherwise.  
    \end{enumerate}
\end{definition}

\begin{proposition}\label{necessary condition for L to be in good form}
    Suppose that $L = a H_e + b F_e - \sum\limits_{i=1}^r n_iE_i$ is a line bundle on $X_r$ such that it is in good form. Then for any curve $C \subset X_r$ such that $C$ is a $(-1)$-curve or $C = \widetilde{\Gamma_e}$, $L\cdot C \geq 0$.
\end{proposition}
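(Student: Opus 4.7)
The plan is to invoke the decomposition
\[
L = (a-2n_1)H_e + \bigl(b-(e+2-2g)n_1\bigr)F_e + \sum_{i=1}^{r-1}(n_i-n_{i+1})H_i' + n_r H_r'
\]
displayed just before Definition \ref{definition of good form}. By conditions (i), (ii), (iii) of good form, every scalar coefficient appearing here is nonnegative, so it suffices to show that $H_e\cdot C$, $F_e\cdot C$, $H_i'\cdot C$, and $H_r'\cdot C$ are each $\geq 0$. Two of these are immediate: $F_e$ is the pullback of a fiber of $X\to \Gamma$ and hence nef, so $F_e\cdot C\geq 0$; and $H_r'=-K_{X_r}$, which intersects any $(-1)$-curve in $+1$ by adjunction. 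Since $H_i'=H_r'+\sum_{j=i+1}^r E_j$, splitting into the sub-cases $C=E_k$ and $C=\widetilde{\mathcal{C}}$ for $\mathcal{C}$ an irreducible curve on $X$ quickly yields $H_i'\cdot C\geq 0$ as well.

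For $H_e\cdot C$, I would argue as follows: if $C=E_k$ then $H_e\cdot C=0$, while if $C=\widetilde{\mathcal{C}}$ with $\mathcal{C}$ irreducible and $\mathcal{C}\neq \Gamma_e$, then \cite[Chapter V, Propositions 2.20 and 2.21]{Hartshorne1977} force $\mathcal{C}\equiv \alpha\Gamma_e+\beta f$ with $\beta-\alpha e\geq 0$, whence $H_e\cdot C=\beta-\alpha e\geq 0$. Combining, for every $(-1)$-curve $C$ other than $\widetilde{\Gamma_e}$, all four intersection products are nonnegative and $L\cdot C\geq 0$, without using condition (iv) at all.

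The main obstacle, and the case where the decomposition strategy breaks, is $C=\widetilde{\Gamma_e}$, because $H_e\cdot \widetilde{\Gamma_e}$ can equal $-e<0$. Here I would compute $L\cdot\widetilde{\Gamma_e}$ directly. Writing $\widetilde{\Gamma_e}=H_e-\sum\mu_iE_i$ with $\mu_i\in\{0,1\}$ (since $\Gamma_e$ is smooth), one has
\[
L\cdot\widetilde{\Gamma_e}=(b-ae)-\sum_{i=1}^r n_i\mu_i.
\]
When $X\neq \Gamma\times \mathbb{P}^1$, the curve $\Gamma_e$ is fixed on $X$ (for $e>0$ it is the unique normalized section, and in general the very general points can be chosen to avoid the specific representative $\Gamma_e$), so every $\mu_i=0$ and condition (iv) gives $L\cdot\widetilde{\Gamma_e}=b-ae\geq 0$. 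When $X=\Gamma\times \mathbb{P}^1$, so $e=0$, every representative of $|\Gamma_e|$ is of the form $\Gamma\times\{x\}$ and, as very general points have distinct $\mathbb{P}^1$-coordinates, passes through at most one $p_i$; thus $\sum\mu_i\leq 1$ and $\sum n_i\mu_i\leq n_1$, yielding $L\cdot\widetilde{\Gamma_e}\geq b-n_1\geq 0$ by (iv).

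In summary, the entire proof is driven by the decomposition, with condition (iv) of good form tailored precisely to rescue the $C=\widetilde{\Gamma_e}$ case; the two clauses of (iv) correspond exactly to whether $|\Gamma_e|$ contains a moving family of representatives or not.
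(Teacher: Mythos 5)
Your argument is correct and follows essentially the same route as the paper: decompose $L$ in terms of $H_e$, $F_e$ and the $H_i'$, check that each of these meets a $(-1)$-curve non-negatively (using adjunction for $H_i'=-K_{X_r}+\sum_{j>i}E_j$ and Hartshorne V.2.20/2.21 for $H_e$), and handle $C=\widetilde{\Gamma_e}$ by a direct computation that invokes condition (iv). The only cosmetic difference is in the case $X=\Gamma\times\mathbb{P}^1$, where you use the product geometry to see that a member of $|\Gamma_e|$ meets at most one $p_i$, while the paper derives the same fact from the Ein--Lazarsfeld/Xu inequality $0=\Gamma_e^2\ge\sum m_i^2-m$.
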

\begin{proof}
  Suppose that $C = E_i$ for some $i$. Then $L \cdot C = n_i \geq 0$. So we assume that $C$ is the strict transform of some curve $\mathcal{C} \subset X$. Let $\mathcal{C} = \alpha \Gamma_e + \beta f$, so that $C = \alpha H_e + \beta F_e - \sum\limits_{i=1}^r m_i E_i$, where $m_i = \text{mult}_{p_i}\mathcal{C}$. We again consider both the possibilities for $C$ separately. \\

  \noindent \textbf{Case (i):} Suppose that $C$ is a $(-1)$-curve in $X_r$. In order to show that $L \cdot C \geq 0$, it is sufficient to show that the intersection products of $C$ with $H_e$, $F_e$ and all the $H_i'$ are non-negative (for, since $L$ is in good form, the coefficients of $H_e,F_e$ and all the $H_i'$ are already non-negative, by definition). As each $m_j \geq 0$ and 
  \begin{equation*}
      (H_i' + K_{X_r}) \cdot C = (E_{i+1}+\dots+E_r) \cdot C = m_{i+1}+\dots+m_r \ge 0 , 
  \end{equation*}
  we have $H_i' \cdot C \geq -K_{X_r} \cdot C = 1$, as $C$ is a $(-1)$-curve. Also, $F_e \cdot C = \alpha \geq 0$. Finally, $H_e \cdot C = \beta - \alpha e$. We will now show that $\beta - \alpha e \geq 0$ irrespective of the value of $e$. \\
  
  Suppose that $e < 0$. %If $\beta \geq 0$, then clearly $\beta - \alpha e \geq 0$. If $\beta < 0$, even then, 
  Then it is easy to see that  $\beta \geq \frac{\alpha e}{2}$. So $\beta - \alpha e \geq - \frac{\alpha e}{2} \geq 0$. \\

  Suppose that $e \geq 0$. If $C \neq \widetilde{\Gamma_e},\widetilde{f}$, then $\beta - \alpha e \geq 0$ as $\mathcal{C}$ is an irreducible curve. If $C = \widetilde{f}$, then $\beta - \alpha e = 1 > 0$. We handle the $C = \widetilde{\Gamma_e}$ in \textbf{Case (ii)}. \\

  \noindent \textbf{Case (ii):} Suppose that $C = \widetilde{\Gamma_e}$. In this case, if $e > 0$, then since $\Gamma_e ^ 2 = -e < 0$, we have $|\Gamma_e| = \{\Gamma_e\}$. Now, since the points $p_i$ are very general, we can choose them to be outside $\Gamma_e$, and therefore, $L \cdot C = b- ae \geq 0$, by condition (iv) in the definition of good form.\\

By \cite[Lemma 17]{Garcia2005},  $h^0(\mathcal{O}_X(\Gamma_e)) = 2$, if $X = \Gamma \times \mathbb{P}^1$ and $h^0(\mathcal{O}_X(\Gamma_e)) = 1$, otherwise.   If $e \leq 0$ and $X \neq \Gamma \times \mathbb{P}^1$, then since $h^0(\mathcal{O}_X (\Gamma_e)) = 1$, we again have $|\Gamma_e| = \{\Gamma_e\}$, which can be handled as above. Finally, if $X = \Gamma \times \mathbb{P}^1$, then since $0 = \Gamma_e^2 \ge \sum\limits_{i=1}^r m_i^2 - m$, we can conclude that exactly one $m_i =1$, all other $m_j$ are zero and $m=1$. This means, $\Gamma_e$ passes through exactly one $p_i$. So, in this case, $L \cdot C = b - ae - n_i \ge 0$ by conditions (i) and (iv) in the definition of a good form.
\end{proof}

Suppose now that $x \in X_r$ is a very general point and $\pi_x : \text{Bl}_x(X_r) \rightarrow X_r$ is the blow-up of $X_r$ at $x$ with $E_x$ as the exceptional divisor. In this setting, we have the following theorem which gives a condition for an ample line bundle to have maximal Seshadri constant at a very general point. 

\begin{theorem} \label{seshadri constant equal to rootLsquare}
Let $X$ be a ruled surface and $X_r$ be the blow-up of $X$ at $r$ very general points. 
Let $x \in X_r$ be a very general point and $\pi_x : {\rm Bl}_x(X_r) \rightarrow X_r$ the blow-up of $X_r$ at $x$ with $E_x$ as the exceptional divisor. Suppose that $L$ is an ample line bundle on $X_r$ such that $\pi_x^{*}L - \sqrt{L^2} E_x$ is in good form. If Conjecture \ref{conjecture-1} is true, then $\varepsilon(X_r, L,x) = \sqrt{L^2}$.
\end{theorem}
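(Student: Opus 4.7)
The plan is to prove $\varepsilon(X_r, L, x) = \sqrt{L^2}$ by establishing the reverse direction of the universal bound $\varepsilon(X_r, L, x) \le \sqrt{L^2}$. Equivalently, I will show that the $\mathbb{R}$-divisor
\[
\mathcal{M} := \pi_x^{*} L - \sqrt{L^2}\, E_x
\]
is nef on $X_{r+1} := \mathrm{Bl}_x(X_r)$. The key structural observation is that $X_{r+1}$ is the blow-up of $X$ at the $r+1$ very general points $p_1, \dots, p_r, \bar\pi(x)$, where $\bar\pi = \pi \circ \pi_x$, so Conjecture \ref{conjecture-1} can be applied on $X_{r+1}$. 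The verification of $\mathcal{M} \cdot C \ge 0$ for every irreducible curve $C \subset X_{r+1}$ then splits into two cases according to the sign of $C^2$.

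For $C^2 \ge 0$, the curve $C$ must be the strict transform of an irreducible $\mathcal{C} = \alpha \Gamma_e + \beta f \subset X$ with multiplicities $m_1, \dots, m_{r+1}$ at the $r+1$ blow-up points, and $C^2 \ge 0$ gives $\mathcal{C}^2 \ge \sum_{i=1}^{r+1} m_i^2$. Setting $\mathcal{L}_X := a\Gamma_e + bf$, a direct intersection computation yields
\[
\mathcal{M} \cdot C \;=\; \mathcal{L}_X \cdot \mathcal{C} \;-\; \sum_{i=1}^{r} n_i m_i \;-\; \sqrt{L^2}\, m_{r+1}.
\]
Since $\sum_{i=1}^{r} n_i^2 + L^2 = \mathcal{L}_X^2$, applying Cauchy-Schwarz to the vectors $(n_1, \dots, n_r, \sqrt{L^2})$ and $(m_1, \dots, m_r, m_{r+1})$ bounds the subtracted quantity above by $\sqrt{\mathcal{L}_X^2 \cdot \sum_{i=1}^{r+1} m_i^2}$. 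At the same time, the Hodge Index Theorem, applicable because $\mathcal{L}_X^2 \ge L^2 > 0$ and $\mathcal{L}_X \cdot \mathcal{C} = L \cdot \pi^{*} \mathcal{C} > 0$ by ampleness of $L$ on $X_r$, gives $\mathcal{L}_X \cdot \mathcal{C} \ge \sqrt{\mathcal{L}_X^2 \cdot \mathcal{C}^2} \ge \sqrt{\mathcal{L}_X^2 \cdot \sum_{i=1}^{r+1} m_i^2}$. Chaining these inequalities yields $\mathcal{M} \cdot C \ge 0$.

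For $C^2 < 0$, Conjecture \ref{conjecture-1} classifies $C$ as either a $(-1)$-curve, equal to $\widetilde{\Gamma_e}$, or the strict transform of some $\mathcal{C} \subset X$ with $\mathcal{C}^2 \le 0$. The first two types are dispatched by Proposition \ref{necessary condition for L to be in good form} applied to $\mathcal{M}$; that proposition is stated for line bundles, but its proof manipulates only intersection numbers and extends verbatim to the $\mathbb{R}$-divisor $\mathcal{M}$ using the good form hypothesis. For the third type, if $\mathcal{C}$ meets none of the blow-up points then $\mathcal{M} \cdot C = \mathcal{L}_X \cdot \mathcal{C} > 0$; otherwise \cite{Ein-Lazarsfeld1993} forces $\mathcal{C}^2 = 0$ and the Xu-type inequality \eqref{Xu's like inequality} forces exactly one multiplicity to equal $1$. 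Repeating the Case (ii) analysis in the proof of Proposition \ref{equivalence of conjectures 1 and 2} then identifies $\mathcal{C}$ as either a fiber $f$ (whose strict transform is a $(-1)$-curve), as $\Gamma_e$ with $e = 0$ (already covered), or as a rigid divisor $\alpha \Gamma_e + (\alpha e/2) f$ that the very general points avoid.

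The step I expect to be the main obstacle is the third sub-case above: genuinely handling strict transforms of curves with $\mathcal{C}^2 \le 0$, a class not directly covered by Proposition \ref{necessary condition for L to be in good form}. Its reduction to the earlier cases hinges on the numerical analysis from Proposition \ref{equivalence of conjectures 1 and 2}, on the rigidity of divisors of the form $\alpha \Gamma_e + (\alpha e/2) f$, and on careful use of the very-general hypothesis on $p_1, \dots, p_r, x$. A secondary, more routine, technical check will be confirming that Proposition \ref{necessary condition for L to be in good form} indeed extends from line bundles to the $\mathbb{R}$-divisor $\mathcal{M}$.
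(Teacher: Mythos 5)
Your proposal is correct and follows essentially the same route as the paper: the substantive content in both is the application of Conjecture \ref{conjecture-1} on $\mathrm{Bl}_x(X_r)$ viewed as a blow-up of $X$ at $r+1$ very general points, the use of Proposition \ref{necessary condition for L to be in good form} for the $(-1)$-curve and $\widetilde{\Gamma_e}$ cases, and the reduction of the $\mathcal{C}^2\le 0$ case to $\mathcal{C}^2=0$ followed by the analysis of $\alpha(2\beta-\alpha e)=0$ with the rigidity of $\alpha\Gamma_e+(\alpha e/2)f$. The only (cosmetic) difference is that you verify nefness of $\pi_x^{*}L-\sqrt{L^2}E_x$ directly, disposing of curves with $C^2\ge 0$ by Cauchy--Schwarz plus the Hodge index theorem, whereas the paper argues by contradiction from a Seshadri-exceptional curve and uses the Hodge index theorem once to conclude that such a curve has negative self-intersection.
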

\begin{proof}
    Suppose that $\varepsilon = \varepsilon(X_r,L,x) < \sqrt{L^2}$. This implies that there exists a reduced irreducible curve $C \subset X_r$ such that $\varepsilon(X_r,L,x) = \frac{L \cdot C}{\text{mult}_xC }$. So we have
    \begin{equation} \label{inequality contradicting the good form}
        0 ~ = ~ (\pi_x^{*}L - \varepsilon E_x) \cdot \widetilde{C} ~ > ~ (\pi_x^{*}L - \sqrt{L^2} E_x) \cdot \widetilde{C},
    \end{equation}
    where $\widetilde{C}$ is the strict transform of $C$. By the Hodge index theorem, $\widetilde{C}^2 < 0$. So, by Conjecture \ref{conjecture-1}, $\widetilde{C}$ has three possibilities, namely 
    \begin{enumerate}
        \item[(i)] $\widetilde{C}$ is a $(-1)$-curve, 
        \item[(ii)] $\widetilde{C}$ is such that $\mathcal{C}^2 \leq 0$, where $\mathcal{C} = \pi \circ \pi_x (\widetilde{C}) = \pi(C)$, or
        \item[(iii)] $\widetilde{C}= \widetilde{H_e}$, which is the strict transform of $H_e$ on $\text{Bl}_x(X_r)$.
    \end{enumerate}
    If $\widetilde{C}$ is of type (i) or (iii), then since $\pi_x^{*}L - \sqrt{L^2} E_x$ is in good form, Proposition \ref{necessary condition for L to be in good form} will contradict the inequality \eqref{inequality contradicting the good form}, thereby completing the proof. So we are left with proving the result for the case when $\widetilde{C}$ is of type (ii). \\

We claim that $\mathcal{C}^2 \geq 0$. This is clear if $p_i \in \mathcal{C}$ for some $i$, since $p_1,\ldots, p_r$ are very general points. Otherwise, $\mathcal{C}^2 = C^2 \ge 0$ since $C \subset X_r$ passes through a very general point $x \in X_r$. 

    Suppose that $\mathcal{C}^2 \leq 0$. 
        %As $\mathcal{C}^2 \geq m(m-1)$, where $m$ is the multiplicity of $\mathcal{C}$ at $x$, we must have $\mathcal{C}^2 \geq 0$ as well. 
        So $\mathcal{C}^2=0$. If $\mathcal{C} = \alpha \Gamma_e + \beta f$, we have $\alpha(2 \beta - \alpha e) = 0$. 
    
    If $\alpha = 0$, then $\mathcal{C} = f$, and so $\widetilde{C}$ will be a $(-1)$-curve, which falls under type (i). 

    Now assume that $\alpha \ne 0$.
    If $\beta = e = 0$, we have $\mathcal{C} = \alpha \Gamma_e$. Then 
    we get the required contradiction since $\alpha \ge 1$ and 
    $(\pi_x^{*}L - \sqrt{L^2} E_x) \cdot \widetilde{H_e} \ge 0$.

    If $\beta \neq 0$ and $\beta = \frac{\alpha e}{2}$, then such curves form a countable collection. As the points that are being blown-up are very general, we can choose them outside this countable collection. This completes the proof.
\end{proof}

Now we show that Conjecture \ref{conjecture-1} guarantees the existence of an ample line bundle satisfying the conditions of Theorem  \ref{seshadri constant equal to rootLsquare}. 
\begin{theorem} \label{construction of the required ample line bundle} Let $X$ be a ruled surface and $X_r$ be the blow-up of $X$ at $r$ very general points. 
Let $x \in X_r$ be a very general point and $\pi_x : {\rm Bl}_x(X_r) \rightarrow X_r$ the blow-up of $X_r$ at $x$ with $E_x$ as the exceptional divisor. Suppose that Conjecture \ref{conjecture-1} is true. Then there exists an ample line bundle $L$ on $X_r$ such that $\pi_x^*L-\sqrt{L^2}E_x$ is in good form.
\end{theorem}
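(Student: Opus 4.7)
The plan is to construct $L$ explicitly as a large multiple of an ample line bundle on $X$ pulled back to $X_r$, minus a uniform integer multiplicity $n$ at each of the blown-up points. Fix any ample line bundle $\mathcal{L}_0 = a_0 \Gamma_e + b_0 f$ on $X$ and, for positive integers $s$ and $n$, set
\begin{equation*}
L_{s,n} := \pi^*(s\mathcal{L}_0) - n(E_1 + \cdots + E_r).
\end{equation*}
By Proposition \ref{NBS analog in our case} together with Remark \ref{NBS for sL}, there exists $r_0$ (depending only on $\mathcal{L}_0$) such that for every $r \geq r_0$ and every $s \geq 1$, the inequality \eqref{NBS inequality} holds for $s\mathcal{L}_0$. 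Hence Theorem \ref{ampleness criterion} applies and $L_{s,n}$ is ample on $X_r$ if and only if $L_{s,n}^2 = s^2 \mathcal{L}_0^2 - r n^2 > 0$.

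Since $x \in X_r$ is very general, $\mathrm{Bl}_x(X_r)$ is naturally the blow-up of $X$ at $r+1$ very general points with exceptional divisors $E_1, \ldots, E_r, E_x$, and
\begin{equation*}
\pi_x^* L_{s,n} - \sqrt{L_{s,n}^2}\, E_x = sa_0 H_e + sb_0 F_e - n(E_1 + \cdots + E_r) - \sqrt{L_{s,n}^2}\, E_x.
\end{equation*}
To arrange good form with largest multiplicity $n_1 = n$, we require $\sqrt{L_{s,n}^2} \leq n$, together with conditions (ii)--(iv) of Definition \ref{definition of good form}. Combined with ampleness, the condition $\sqrt{L_{s,n}^2} \leq n$ forces $n$ to lie in the half-open interval
\begin{equation*}
I_{s,r} := \left[\tfrac{s\sqrt{\mathcal{L}_0^2}}{\sqrt{r+1}},\ \tfrac{s\sqrt{\mathcal{L}_0^2}}{\sqrt{r}}\right).
\end{equation*}
For fixed $r$, the length of $I_{s,r}$ is linear in $s$, so for $s$ sufficiently large this interval contains a positive integer, which we choose as our $n$.

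It then remains to verify the remaining good-form inequalities $sa_0 \geq 2n$, $sb_0 \geq (e+2-2g)n$, and the fourth condition involving $ae$. Using the crude upper bound $n \leq s\sqrt{\mathcal{L}_0^2}/\sqrt{r}$ from the right endpoint of $I_{s,r}$, the factor $s$ cancels and each inequality reduces to a condition of the form $a_0\sqrt{r} \geq C_1$, $b_0\sqrt{r} \geq C_2$, or $b_0 - a_0 e \geq 0$. The last is automatic from the ampleness of $\mathcal{L}_0$, and the first two hold for $r$ sufficiently large. Accordingly, the order of choices is: first choose $r$ large enough to satisfy $r \geq r_0$, Theorem \ref{ampleness criterion}, and the reduced forms of (ii)--(iv); then choose $s$ large enough for $I_{s,r}$ to contain an integer $n$; finally set $L := L_{s,n}$.

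The main obstacle is not any single step but rather the bookkeeping of choosing $r, s, n$ in a coherent order so that the competing inequalities are simultaneously satisfied. The only mildly delicate point is guaranteeing that $I_{s,r}$ contains an integer; this rests on the elementary fact that the length of $I_{s,r}$ grows without bound in $s$ for $r$ fixed, which sidesteps any question about whether $\sqrt{L^2}$ itself is rational.
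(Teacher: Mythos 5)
Your proposal is correct and follows essentially the same route as the paper: both construct $L=\pi^*(s\mathcal{L})-n\sum E_i$ with uniform multiplicity, invoke Remark \ref{NBS for sL} and Theorem \ref{ampleness criterion} to reduce ampleness to $L^2>0$, and then choose the integer parameters so that the good-form inequalities (which, after cancelling $s$, depend only on $r$) are satisfied. The only difference is bookkeeping: the paper fixes $n=1$ and a specific $\mathcal{L}$, making $\sqrt{L^2}$ the largest multiplicity (so it needs $1\le L^2\le s^2/4$, i.e.\ the window \eqref{conditions 1 and 2 for good form combined - case 1}), whereas you let $n$ vary with $\sqrt{L^2}\le n$ and find $n$ in a half-open interval whose length grows linearly in $s$ — an equally valid way to guarantee an integer solution.
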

\begin{proof}
   Let $\mathcal L$ be any ample line bundle on $X$.  By Remark \ref{NBS for sL} and Theorem \ref{ampleness criterion} it is clear that for large enough $r$, $\pi^\ast s\mathcal{L}-\sum\limits_{i=1}^r E_i$ is ample if $s^2\mathcal{L}^2>r$. \\
    
    \noindent \textbf{Case (1): } Suppose that $e \geq 0.$ In this case, we know that $\mathcal{L} :=\Gamma_e+(e+1)f$ is an ample line bundle of $X$. So, by Proposition \ref{NBS analog in our case}, for each $r >>0$, the inequality \eqref{NBS inequality} is true. Fix an $r$ such that the inequality \eqref{NBS inequality} is true for all line bundles $s \mathcal{L}$, where $s \in \mathbb{N}$ (this is possible by Remark \ref{NBS for sL}).
    % This implies by Remark \ref{NBS for sL} that from the same $r$ from which the inequality \eqref{NBS inequality} is true for $\mathcal{L}$, it is also true for the line bundle $s\mathcal{L}$ for each $s \in \mathbb{N}$. 
    Now consider the line bundle $L=\pi^\ast s\mathcal{L}-\sum\limits_{i=1}^r E_i$ on $X_r$. Our goal is to find a suitable $s$ such that $L$ is ample and $\pi_x^*L-\sqrt{L^2}E_x$ is in good form. \\

Observe that $\mathcal{L}^2 = e+2$. Now, for all $s \in \mathbb{N}$ such that 
\begin{equation} \label{suitable choice of s}
    (s \mathcal{L})^2 = s^2(e+2) > r,
\end{equation}  
$L = \pi^\ast s\mathcal{L}-\sum\limits_{i=1}^r E_i$ is ample. 

        For the line bundle $\pi_x^*L-\sqrt{L^2}E_x$, the conditions (i) to (iv) in the definition of good form translate to 
        \begin{enumerate}
            \item[(i)] $\sqrt{L^2} \geq 1$,
            \item[(ii)] $s \geq 2 \sqrt{L^2}$,
            \item[(iii)] $s e + s \geq (e + 2 - 2g) \sqrt{L^2}$, and
            \item[(iv)] $s \geq \sqrt{L^2}$, if $X = \Gamma \times \mathbb{P}^1$ and $s \ge 0$, otherwise.
        \end{enumerate}
        Clearly, the conditions (i) and (ii) imply (iii) and (iv). Therefore, it is enough to exhibit $L$ satisfying conditions (i) and (ii). By definition of $L$, the condition (i) takes the form
        \begin{eqnarray}\label{condition 1 for good form}
            2s(se+s) - s^2 e - r & \geq & 1 \nonumber \\
            \implies s^2(e + 2) - 1 & \geq & r.
        \end{eqnarray}
        Similarly, condition (ii) takes the form
        \begin{eqnarray}\label{condition 2 for good form}
            s^2 & \geq & 4 (s^2(e+2) -r) \nonumber \\
            \implies 4r & \geq & 4s^2(e + 2) - s^2.
        \end{eqnarray}
        Now, multiplying the inequality \eqref{condition 1 for good form} by 4, we get
        \begin{equation}\label{4 times condition 1 for good form}
            4s^2(e + 2) - 4 \geq  4r.
        \end{equation}
        Combining inequalities \eqref{4 times condition 1 for good form} and \eqref{condition 2 for good form}, we get
        \begin{equation}\label{conditions 1 and 2 for good form combined - case 1}
          4s^2(e + 2) - 4 \geq  4r \geq 4s^2(e + 2) - s^2.  
        \end{equation}
        If $r,s$ satisfy the above inequalities, then they also satisfy \eqref{suitable choice of s}. It is not hard to see that for all $r \ge (15 + 8e)^2 (e+2) - \frac{(15 + 8e)^2}{4}$, we can find an $s$ such that the inequalities \eqref{conditions 1 and 2 for good form combined - case 1} are satisfied. \\

        \noindent \textbf{Case (2):} Suppose that $e < 0$. In this case, we consider $\mathcal{L} = \Gamma_e + f$, which is clearly an ample line bundle on $X$. We then proceed similar to \textbf{Case (1)} and get the inequalities 
        \begin{equation} \label{conditions 1 and 2 for good form combined - case 2}
            4s^2(2-e) - 4 \geq  4r \geq 4s^2(2 - e) - s^2.
        \end{equation}
        Now, similar to \textbf{Case (1)}, for all $r \geq (15 - 8e)^2 (2 - e) - \frac{(15-8e)^2}{4}$, we can find an $s$ such that the inequalities \eqref{conditions 1 and 2 for good form combined - case 2} are satisfied. For such a choice of $r,s$, the line bundle $L = \pi^\ast s\mathcal{L}-\sum\limits_{i=1}^r E_i$ on $X_r$ is ample and $\pi_x^*L-\sqrt{L^2}E_x$ is in good form. 
\end{proof}

Finally, putting together Theorem \ref{seshadri constant equal to rootLsquare} and Theorem \ref{construction of the required ample line bundle}, we prove the main theorem of this note. 
\begin{theorem} \label{irrationality theorem}
   % Suppose that Conjecture \ref{conjecture-1} is true. Then there exists an $r$ such that on $X_r$, there is an ample line bundle $L$ with $\varepsilon(X_r,L,x)$ is irrational for a very general point $x \in X_r$. In fact, there are infinitely many such $r$'s.
Let $X$ be a ruled surface and let $\pi: X_r \to X$ be the blow-up of $X$ at $r$ very general points. 
%Let $x \in X_r$ be a very general point and $\pi_x : \text{Bl}_x(X_r) \rightarrow X_r$ the blow-up of $X_r$ at $x$ with $E_x$ as the exceptional divisor. 
Suppose that Conjecture \ref{conjecture-1} is true. Then there exists a pair $(r, L)$ where $r$ is a positive integer and $L$ is an ample line bundle on $X_r$ such that $\varepsilon(X_r,L,x)$ is irrational for a very general point $x \in X_r$. 

    In fact, such a pair exists for infinitely many $r$. 
\end{theorem}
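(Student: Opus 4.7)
The plan is to combine Theorem \ref{seshadri constant equal to rootLsquare} and Theorem \ref{construction of the required ample line bundle} and then use the slack in the inequalities appearing in the proof of Theorem \ref{construction of the required ample line bundle} to arrange that $L^2$ is not a perfect square. More precisely, I would take $\mathcal{L} = \Gamma_e + (e+1)f$ when $e \geq 0$ and $\mathcal{L} = \Gamma_e + f$ when $e < 0$, and consider line bundles of the form $L = \pi^{\ast}(s\mathcal{L}) - \sum_{i=1}^r E_i$ on $X_r$, exactly as in the proof of Theorem \ref{construction of the required ample line bundle}. Then $L^2 = s^2(e+2) - r$ in the first case and $L^2 = s^2(2-e) - r$ in the second.

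The first step is to invoke Theorem \ref{construction of the required ample line bundle} to produce, under Conjecture \ref{conjecture-1}, pairs $(r,s)$ satisfying the inequalities \eqref{conditions 1 and 2 for good form combined - case 1} or \eqref{conditions 1 and 2 for good form combined - case 2}, so that $L$ is ample and $\pi_x^{\ast} L - \sqrt{L^2} E_x$ is in good form. The second step is to apply Theorem \ref{seshadri constant equal to rootLsquare} to this $L$ to conclude that $\varepsilon(X_r, L, x) = \sqrt{L^2}$ at a very general point $x \in X_r$. Hence the Seshadri constant will be irrational precisely when $L^2$ is not a perfect square.

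The key observation is that the inequalities \eqref{conditions 1 and 2 for good form combined - case 1} and \eqref{conditions 1 and 2 for good form combined - case 2} translate into a range $1 \leq L^2 \leq s^2/4$ (approximately) for the allowed values of $L^2$. Concretely, given any sufficiently large $s$, I can choose $r$ inside the valid interval so that $L^2$ takes any prescribed positive integer value in $[1, \lfloor s^2/4\rfloor]$. The simplest choice is to fix a non-square value such as $L^2 = 2$: for $e \geq 0$, take $r = s^2(e+2) - 2$ and for $e < 0$, take $r = s^2(2-e) - 2$. Provided $s$ is large enough (which is permitted by Theorem \ref{construction of the required ample line bundle}), this $(r,s)$ lies in the admissible range, so the resulting $L$ is ample, $\pi_x^{\ast} L - \sqrt{2}\, E_x$ is in good form, and $\varepsilon(X_r, L, x) = \sqrt{2}$, which is irrational.

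Finally, since any sufficiently large $s$ works and distinct values of $s$ yield distinct values of $r = s^2(e+2)-2$ (or $s^2(2-e)-2$), this construction produces infinitely many such pairs $(r, L)$, establishing the last sentence of the theorem. I do not expect any step to be a serious obstacle since all the difficulty was absorbed into Proposition \ref{NBS analog in our case}, Theorem \ref{seshadri constant equal to rootLsquare}, and Theorem \ref{construction of the required ample line bundle}; the only mild point to check is that the interval of admissible $L^2$ values indeed contains a non-square integer, which is immediate by taking $L^2 = 2$.
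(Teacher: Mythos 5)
Your proposal is correct and follows essentially the same route as the paper: both combine Theorems \ref{seshadri constant equal to rootLsquare} and \ref{construction of the required ample line bundle} with the choice $r = s^2(e+2)-2$ (for $e \ge 0$, and the analogous choice for $e<0$) so that $L^2 = 2$ and the Seshadri constant equals $\sqrt{2}$. The only detail worth making explicit is that this $r$ lies in the admissible range \eqref{conditions 1 and 2 for good form combined - case 1} exactly when $s^2 \ge 8$, i.e.\ $s \ge 3$, which is the paper's stated condition.
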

\begin{proof}

We first assume $e \ge 0 $. Let $r,s$ be positive integers for which 
the inequalities \eqref{conditions 1 and 2 for good form combined - case 1} are satisfied. Let $\mathcal{L} = \Gamma_e+(e+1)f$ and  $L=\pi^\ast s\mathcal{L}-\sum\limits_{i=1}^r E_i$. 
Then we have 
    \begin{equation*}
        1 \le L^2 = s^2 (e+2) - r \le \frac{s^2}{4}.
    \end{equation*}
    By Theorem \ref{seshadri constant equal to rootLsquare} and Theorem \ref{construction of the required ample line bundle}, $\varepsilon(X_r,L,x) = \sqrt{L^2}$ for a very general point $x \in X_r$. Now, it is not hard to find  $r$ such that $\sqrt{L^2}$ is irrational (in fact, we can find infinitely many such $r$). For example, take $s \ge 3$ and $r = s^2(e+2)-2$. In this case $L^2 = 2$.

    The argument for $e < 0$ is similar. 
\end{proof}

\section*{Acknowledgements}
	
	Authors were partially supported by a grant from Infosys Foundation. The fourth author was supported by the National Board for Higher Mathematics (NBHM), Department of Atomic Energy, Government of India (0204/2/2022/R\&D-II/1785).

\end{document}